\def\Real{\hbox{I\kern-.1667em\hbox{R}}}
\newcommand{\bfun}{\left\{\begin{array}{ll}}
\newcommand{\efun}{\end{array}\right.}
\renewcommand{\P}{{\cal P}}
\renewcommand{\L}{{\cal L}}
\newcommand{\B}{{\cal B}}
\newcommand{\C}{{\cal C}}
\newcommand{\D}{{\cal D}}
\newcommand{\F}{{\cal F}}
\newtheorem{definition}{Definition}
\newtheorem{example}{Example}
\newtheorem{theorem}{Theorem}
\newtheorem{lemma}{Lemma}
\newtheorem{proposition}{Proposition}
\newcommand{\comment}[1]{}
\begin{document}
%\begin{frontmatter}

\title{Infinite Previsions and Finitely Additive Expectations}
%\runtitle{Finitely Additive Expectations}
\author{
\name{M.J. Schervish}
\corradd{Department of Statistics, Carnegie Mellon University,
  Pittsburgh, PA  15213, USA}
\position{Professor}
\department{Department of Statistics}
\organization{Carnegie Mellon University}
\city{Pittsburgh}
\address{Pittsburgh, \PA\ 15213}
\country{\usa}
\email{mark@cmu.edu}
\name{Teddy Seidenfeld}
\position{Herbert A. Simon University Professor}
\department{Departments of Philosophy and Statistics}
\organization{Carnegie Mellon University}
\city{Pittsburgh}
\address{Pittsburgh, \PA\ 15213}
\country{\usa}
\email{teddy@stat.cmu.edu}
\name{J.B. Kadane}\email{kadane@stat.cmu.edu}
\position{L. J. Savage Professor Emeritus}
\department{Department of Statistics}
\organization{Carnegie Mellon University}
\city{Pittsburgh}
\address{Pittsburgh, \PA\ 15213}
\country{\usa}
}
\keywords{coherence, Daniell integral, finitely additive probability,
  infinite expectation, prevision, unbounded random variables}
\AMS[62C05]{62A01}
\maketitle

\begin{abstract}
We give an extension of de Finetti's concept of coherence to unbounded
(but real-valued) random variables that allows for gambling in the
presence of infinite previsions.  We present a finitely additive
extension of the Daniell integral to unbounded random variables that
we believe has advantages over Lebesgue-style integrals in the
finitely additive setting.  We also give a general version of the
Fundamental Theorem of Prevision to deal with conditional previsions
and unbounded random variables.
\end{abstract}

\section{Introduction}
De Finetti (1974) presented a theory of finitely additive probability
in which the concept of {\em prevision} played the roles of both
probability and expected value (or expectation). De Finetti's theory
was motivated in two different, but equivalent, manners.  First, he
developed a theory of {\em coherent} gambling in which a bookie
chooses fair prices for gambles while trying to avoid uniform sure
loss.  Second, he took a decision-theoretic approach in which an agent
chooses previsions for random variables while being subject to a loss
function.  The agent tries to avoid choosing previsions such that an
alternative choice could achieve uniformly smaller loss.

De Finetti developed his theory fairly completely for the case in
which all random variables under consideration are bounded.  He
realized that unbounded random variables introduce interesting issues
for his theory, but he did not pursue those issues very far.  Crisma, Gigante,
and Millossovich (1997) and Crisma and Gigante (2001) present one form
of an extension of de Finetti's theory to unbounded random variables.  This
paper presents a number of extensions that are more in the spirit of
de Finetti's original theory.

Section~\ref{sec:df} gives a brief summary of de Finetti's theory for
random variables with finite previsions and
the notation that will be used in the rest of the paper.
Section~\ref{sec:infprev} gives our extension of coherence to infinite
previsions and compares the extension to the existing extension of  Crisma et
al. (1997) and Crisma and Gigante (2001).  Section~\ref{sec:extend}
shows how the fair price and 
decision theoretic motivations of de Finetti's theory remain
equivalent in the extension to unbounded random variables and to more
general loss functions than de Finetti originally
envisioned. Section~\ref{sec:prevexp} gives an 
introduction to finitely additive Daniell integrals along with their relation
to finitely additive expectations and coherence.    Section~\ref{sec:ftcp}
characterizes the relationship between conditional and marginal
previsions. 
Section~\ref{sec:ftp} shows how to extend a collection of coherent
previsions to an arbitrary larger collection.

\section{De Finetti's Two Definitions of Coherence}\label{sec:df}
Coherence of previsions, as de Finetti (1974, Chapter 3) formulates it, is the
criterion that a rational decision maker tries to avoid making
decisions that lead to uniformly larger loss than alternative
available decisions.

Let  $\Omega$ be a set. The elements of $\Omega$ will be 
 called {\em states} and denoted $\omega$. Subsets of $\Omega$ are
 called {\em events}.  {\em Random variables} are
 real-valued functions with domain $\Omega$, which we denote with
 capital letters. We take the liberty of identifying events with their
 indicator random variables.  That is, if $B\subseteq\Omega$, we let
 $B(\omega)$ stand for the random variable that equals 1 for all
 $\omega\in B$ and equals 0 for all $\omega\not\in B$.
The decisions that de Finetti (1974) contemplated
 are the assignments of (conditional) previsions for random variables.
\begin{definition}\label{def:coh}
{\rm Let $X$ be a random variable and let $B$ be a nonempty event.
A {\em prevision $P(X|B)$ for $X$ given $B$} is a fair price for
buying and selling $X$ under the condition that $B$ occurs.  That is,
for all real $\alpha$, the gamble that causes the agent
  to gain $\alpha B[X-P(X)]$ is considered fair (or acceptable).  If
  $B=\Omega$, we 
can denote $P(X|\Omega)=P(X)$ and call it a {\em marginal prevision for
  $X$}. 

A collection $\{P(X_i|B_i): i\in
  I\}$ of such previsions is {\em coherent$_1$} if, for every finite
  subset $\{i_1,\ldots,i_n\}\subseteq I$ and all real
  $\alpha_1,\ldots,\alpha_n$
\begin{equation}\label{eq:coh1}
\sup_\omega\sum_{j=1}^n\alpha_jB_{i_j}(\omega)[X_{i_j}(\omega)-P(X_{i_j})]\geq0.
\end{equation} 
That is, the resulting fair gambles do not allow uniformly negative
gain (also known as uniform sure loss).
 
A collection of forecasts is {\em coherent$_2$} if,
for every rival set $\{Q(X_i|B_i):i\in I\}$ of previsions for the same random
variables, every $\{i_1,\ldots,i_n\}\subseteq I$, and all nonnegative $\alpha_1,\ldots,\alpha_n$, 
\[\inf_\omega\sum_{j=1}^n\alpha_jB_{i_j}(\omega)\left\{[X_{i_j}(\omega)-P(X_{i_j})]^2-[X_{i_j}(\omega)
-Q(X_{i_j})]^2\right\}\leq0.\]
That is, no rival set of previsions leads to uniformly smaller
squared-error loss.
}\end{definition}

 De Finetti (1974, pp.~88--89) proved that a decision maker who wishes
 to be both coherent$_1$ and coherent$_2$ must choose the same
 previsions for both purposes.
Squared-error loss is a special case of a strictly proper scoring rule.
\begin{definition}\label{def:4} {\rm A {\em scoring rule} for coherent
    previsions of a random variable $X$ is a real-valued loss function
    $g$ with two real arguments: 
  a value of the random variable and a potential prevision $q$.  Let
  $\P$ be a collection of probability distributions that give finite
  prevision to $X$.
  We say that $g$ is {\em proper for $\P$} if, for every probability $P\in\P$,
  $P[g(X,q)]$ is minimized (as a function of $q$) by $q=P(X)$.
If, in addition,  only the quantity $q=P(X)$ minimizes expected score,
then the scoring rule is {\em strictly proper}.}
\end{definition} 

Some authors reserve the qualification ``strictly proper'' for scoring
rules that are designed to elicit an entire distribution, rather than
just the mean of a distribution.  (See Gneiting, 2011a, who calls the
latter kind {\em strictly consistent}.)  For the remainder of this paper, we
follow the language of Definition~\ref{def:4}, which matches the usage in
Gneiting (2011b).

Schervish, et al. (2013) consider the class of all
scoring rules of the form
\begin{equation}\label{eq:gdef}
g(x,q)=\bfun\int_x^q(v-x)d\lambda(v)&\mbox{if $x\leq q$,}\\[10pt]
 \int_q^x(x-v)d\lambda(v)&\mbox{if $x>q$,}\efun
\end{equation} 
where $\lambda$ is a measure that is mutually absolutely continuous with
Lebesgue measure and is finite on every bounded interval.  They also show
that all such scoring rules are strictly proper for the class of
probability measures that give finite mean to $g(X,q)$ for at least
one $q$.  The form (\ref{eq:gdef}) is suggested by equation (4.3) of
Savage (1971). 

Theorem~\ref{thm:equiv} in Section~\ref{sec:extend} generalizes the
equivalence of coherence$_1$ and coherence$_2$ to the class of
strictly proper scoring rules of the form (\ref{eq:gdef}).

\section{Infinite Previsions}\label{sec:infprev}
If one wishes to consider arbitrary sets of random variables,
including unbounded random variables, then there will be examples of
random variables that cannot be assigned finite previsions or
expectations.  One cannot expect a bookie to offer to pay an infinite
amount in exchange for a finite-valued random variable, no matter how
likely it is to take large values.  In particular, (\ref{eq:coh1})
makes no sense as a criterion for coherence if previsions are allowed
to be infinite.  Instead, we interpret an infinite
prevision as an offer to accept one side or the other of a gamble, but
not both sides.  For example, $P(X)=\infty$ means that the bookie
would pay an arbitrarily large amount $c$ in exchange for receiving $X$, but
she would not accept any finite amount in exchange for paying out $X$.
We express that idea formally in Definition~\ref{def:infprev}.
\begin{definition}\label{def:infprev}
{\rm Let $X$ be a random variable, and let $B$ be a nonempty event.  To say
that $P(X|B)=\infty$ means that
every finite number can be the price to buy $X$, but no
number can be the price to sell $X$, under the condition that $B$
occurs.  Similarly, to say that $P(X|B)=-\infty$ means that
every finite number can be the price to
sell $X$, but no number can be the price to buy $X$, under the
condition that $B$ occurs. The resulting gambles $\alpha B[X-c]$ that the agent is
willing to accept, namely those with $\alpha\geq0$ when
$P(X|B)=\infty$ and those with $\alpha\leq0$ when $P(X|B)=-\infty$ are
called {\em acceptable}.  We also call each finite sum of acceptable
gambles {\em acceptable}.}
\end{definition} 
We can now extend coherence$_1$ to collections that include infinite
previsions. 
\begin{definition}\label{def:cohinf}
{\rm Let $\{P(X_i|B_i): i\in I\}$ be a collection of 
previsions.  The previsions are {\em coherent$_1$} if, 
\begin{equation}\label{eq:inf0}
\sup_\omega\sum_{j=1}^n\alpha_jB_{i_j}(\omega)[X_{i_j}(\omega)-c_j]\geq0,
\end{equation} 
for all $\{i_1,\ldots,i_n\}\subseteq I$, all real
$\alpha_1,\ldots,\alpha_n$ such that $\alpha_j\geq0$ for all $j$
with $P(X_{i_j}|B_{i_j})=\infty$ and $\alpha_j\leq0$ for all $j$ with
$P(X_{i_j}|B_{i_j})=-\infty$, and all real $c_1\ldots,c_n$ such that
$c_j=P(X_{i_j}|B_{i_j})$ for each $j$ such that $P(X_{i_j}|B_{i_j})$ is finite.
That is, no acceptable gamble leads to uniform sure loss.}
\end{definition}
A necessary condition for $P(X)=\infty$ to be coherent$_1$ is that $X$ is
unbounded above, and a necessary condition for $P(X)=-\infty$ to be
coherent$_1$ is that $X$ is unbounded below.  But there are unbounded
random variables with coherent$_1$ finite previsions.
\begin{example}\label{exa:stpete}
{\rm Let $\Omega$ be the integers, and suppose that
$P(\{\omega=k\})=2^{-k}$ for each integer $k\geq1$.  This is a
countably additive probability.
Let $X(\omega)=\sum_{k=1}^\infty k\{\omega=k\}$.  The countably
additive expectation of
$X$ is 2, which is a coherent$_1$ finite prevision for $X$.  Actually,
every number greater than or equal to 2 (including $\infty$) is a
coherent$_1$ prevision for $X$.  On the other
hand, if $Y(\omega)=\sum_{k=1}^\infty 2^k\{\omega=k\}$, then the only
coherent$_1$ prevision for $Y$ is $\infty$.  Also, let
$Z(\omega)=\sum_{k=1}^\infty-k\{\omega=-k\}$. The countably additive
expectation of $Z$ is 0, but we will set $P(Z)=-\infty$.  Finally, let
$V=Y+Z$.  We cannot assign $P(V)=\infty-\infty$, however every extended real
number is a possible coherent$_1$ prevision for $V$.  The rest of
this example shows that the stated previsions are coherent$_1$.  The
random variables to which we have assigned previsions are the
indicators of the events $\{\omega=k\}$ for all integers $k$ and the
random variables $X,Y,Z,V$.  The most general acceptable gamble is a
linear combination of finitely many of the indicators and $X-x$ (with
$x\geq2$ and nonnegative coefficient if $P(X)=\infty$) together
with $Y-y$ (with $y$ real and nonnegative coefficient), $Z-z$ (with
$z$ real and nonpositive coefficient), and $V-v$ (with $v$ real and
coefficient whose sign matches the sign of the prevision if $P(V)$ is
infinite.  That is, let $k_1,\ldots,k_n$ be  integers, and let  
\[W(\omega)=\sum_{j=1}^n\alpha_j\left[\{\omega=k_j\}-2^{-k_j}\right]+\alpha_X[X-x]
+\alpha_Y[Y-y]+\alpha_Z[Z-z]+\alpha_V[V-v],\] 
where $x\geq2$, $y$, $z$ and $v$ are finite, $\alpha_Y\geq0$,
$\alpha_Z\leq0$, $\alpha_X\geq0$ if $P(X)=\infty$, and the sign of
$\alpha_V$ matches the sign of $P(V)$ if $P(V)$ is infinite. Then all
acceptable gambles are of the form of $W$.  If $\alpha_Y>0$ or if $\alpha_Z<0$ or if 
$\alpha_X>0$ or if $\alpha_V\ne0$, then $W$ clearly takes some
nonnegative values so that the supremum is at least 0.  The only cases
not handled yet have $\alpha_X\leq0$ and
$\alpha_Y=\alpha_Z=\alpha_v=0$.  In this case, 
\[W(\omega)=\sum_{j=1}^n\alpha_j\left[\{\omega=k_j\}-2^{-k_j}\right]
+\alpha_X[X-2]-\alpha_X(c-2).\]
Since $-\alpha_X(c-2)\geq0$, $W$ is no smaller than
\[W'(\omega)=\sum_{j=1}^n\alpha_j\left[\{\omega=k_j\}-2^{-k_j}\right]+\alpha_X[X-2].\]
Since $\sum_{\omega=1}^\infty W'(\omega)2^{-\omega}=0$, $\sup_\omega
W'(\omega)\geq0$, hence $\sup_\omega W(\omega)\geq0$, and the
previsions are coherent$_1$.}
\end{example} 

An alternative definition of coherent
infinite marginal previsions was presented by Crisma et al. (1997).
We repeat their definition here, and then prove that it is equivalent
to Definition~\ref{def:cohinf} for marginal previsions. 
\begin{definition}\label{def:infpi}
{\em Let $I$ be an index set, and let $\D=\{X_i:i\in I\}$ be a collection
of random variables defined on $\Omega$.  Let $P$ be an
extended-real-valued function defined on $\D$.  We say that $P$ is
{\em extended-coherent} if 
\begin{equation}\label{eq:infpi}
\inf_{\omega\in\Omega}\sum_{j=1}^n\alpha_jX_{i_j}(\omega)\leq
\sum_{j=1}^n\alpha_jP(X_{i_j})\leq
\sup_{\omega\in\Omega}\sum_{j=1}^n\alpha_jX_{i_j}(\omega),
\end{equation} 
for every finite integer $n$, all
$i_1,\ldots,i_n\in I$, and all real $\alpha_1,\ldots,\alpha_n$ such
that all infinite terms of the form $\alpha_jP(X_{i_j})$ have the same sign.}
\end{definition} 
It is not clear how Definition~\ref{def:infpi} regulates the prices
that a bookie is willing to pay/accept for various gambles with
infinite previsions. Furthermore, Definition~\ref{def:infpi} does not
apply to conditional previsions as stated.  On the other hand,
Definitions~\ref{def:infpi} and~\ref{def:cohinf} are equivalent when
applied to marginal previsions.
\begin{lemma}\label{lem:equivecc1}
A collection $\{P(X_i):i\in I\}$ of marginal previsions is
extended-coherent if and only if it is coherent$_1$. 
\end{lemma}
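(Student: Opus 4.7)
The plan is to read each direction off by unpacking the two definitions with careful attention to the sign conventions. The key preliminary observation is that Definition~\ref{def:cohinf} forces every nonzero $\alpha_jP(X_{i_j})$ that is infinite to equal $+\infty$, whereas Definition~\ref{def:infpi} allows all such terms to equal $+\infty$ or all to equal $-\infty$. Since replacing $\alpha$ by $-\alpha$ swaps these two possibilities in Definition~\ref{def:infpi} and interchanges the two inequalities of (\ref{eq:infpi}), for the direction coherent$_1\Rightarrow$ extended-coherent it suffices to handle the $+\infty$ case, together with both inequalities in (\ref{eq:infpi}).

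For extended-coherent $\Rightarrow$ coherent$_1$: fix $\alpha$'s and $c$'s as in (\ref{eq:inf0}) and set $J_\infty=\{j:\alpha_j\neq0,\,P(X_{i_j})\text{ infinite}\}$. The sign rule of Definition~\ref{def:cohinf} guarantees $\alpha_jP(X_{i_j})=+\infty$ on $J_\infty$, so the hypothesis of (\ref{eq:infpi}) is met. If $J_\infty=\emptyset$, the right half of (\ref{eq:infpi}) gives $\sup_\omega\sum_j\alpha_jX_{i_j}(\omega)\geq\sum_j\alpha_jP(X_{i_j})=\sum_j\alpha_jc_j$, which is exactly (\ref{eq:inf0}). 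If $J_\infty\neq\emptyset$, the middle term of (\ref{eq:infpi}) is $+\infty$, forcing $\sup_\omega\sum_j\alpha_jX_{i_j}(\omega)=+\infty$, and (\ref{eq:inf0}) then follows because $\sum_j\alpha_jc_j$ is finite.

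For coherent$_1$ $\Rightarrow$ extended-coherent: by the reduction above, assume every infinite $\alpha_jP(X_{i_j})$ equals $+\infty$, so that $\alpha$ obeys the sign constraint of Definition~\ref{def:cohinf}. To prove the right inequality of (\ref{eq:infpi}), apply (\ref{eq:inf0}) with $c_j=P(X_{i_j})$ at the finite-$P$ indices and, at indices in $J_\infty$, with $c_j$ chosen to make $\alpha_jc_j$ as large positive as desired (take $c_j\to+\infty$ when $P(X_{i_j})=+\infty$ and $c_j\to-\infty$ when $P(X_{i_j})=-\infty$); this yields $\sup_\omega\sum_j\alpha_jX_{i_j}(\omega)\geq\sum_j\alpha_jP(X_{i_j})$ when $J_\infty=\emptyset$ and forces the supremum to be $+\infty$ otherwise. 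The left inequality is trivial when $J_\infty\neq\emptyset$, since the middle term is then $+\infty$; when $J_\infty=\emptyset$ it follows by applying (\ref{eq:inf0}) to $-\alpha$, whose sign constraint is automatic because $\alpha_j$ vanishes at every infinite-$P$ index.

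The only genuinely non-routine step is the maneuver in the previous paragraph when $J_\infty\neq\emptyset$: one must exploit the fact that, at indices where $P(X_{i_j})$ is infinite, Definition~\ref{def:cohinf} allows any real $c_j$, so acceptable gambles can be constructed that drive the supremum arbitrarily high. This is the technical encoding of de Finetti's betting interpretation of $P(X)=+\infty$ as the bookie's willingness to accept every finite buying price, and it is precisely what reconciles the two syntactically different definitions on marginal previsions.
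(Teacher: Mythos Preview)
Your proof is correct and follows essentially the same approach as the paper's. The only organizational difference is that you use the symmetry $\alpha\mapsto-\alpha$ to reduce the coherent$_1\Rightarrow$ extended-coherent direction to the case where all infinite $\alpha_jP(X_{i_j})$ equal $+\infty$, whereas the paper treats the all-finite, common-positive, and common-negative cases separately; in both arguments the crucial maneuver---exploiting the freedom of the $c_j$ at infinite-prevision indices to force $\sup_\omega\sum_j\alpha_jX_{i_j}(\omega)=+\infty$---is identical.
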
 
\begin{proof}
For the ``if'' direction, assume that the previsions are
coherent$_1$. Let $n$ be a finite integer, let $i_1,\ldots,i_n\in I$,
and let $\alpha_1,\ldots,\alpha_n$ be real numbers such that all infinite
values of $\alpha_jP(X_{i_j})$ have the same sign. We need to show
that (\ref{eq:infpi}) holds.  First, suppose that all of the $P(X_{i_j})$ are
finite.  By coherence$_1$, we know that
\begin{eqnarray}\label{eq:check2}
\sup_{\omega\in\Omega}\sum_{j=1}^n\alpha_j[X_{i_j}(\omega)-P(X_{i_J})]&\geq&0,\\
\label{eq:check1}\sup_{\omega\in\Omega}\sum_{j=1}^n(-\alpha_j)[X_{i_j}(\omega)-P(X_{i_J})]
&\geq&0. 
\end{eqnarray} 
Inequality (\ref{eq:check1}) implies the first inequality in
(\ref{eq:infpi}), and (\ref{eq:check2}) implies the second.
Next, suppose that there are some infinite previsions among the
$P(X_{i_j})$ and all of the corresponding $\alpha_jP(X_{i_j})$ have
the same sign.  If the common sign is negative, then 
$\sum_{j=1}^n\alpha_jP(X_{i_j})=-\infty$, the second inequality in
(\ref{eq:infpi}) is trivially satisfied, and the $\alpha_j$
corresponding to infinite previsions all have the wrong signs to be
used in acceptable gambles. It follows that for all  real
$c_1,\ldots,c_n$ such that $c_j=P(X_{i_j})$ whenever $P(X_{i_j})$ is finite,
\[\sup_{\omega\in\Omega}\sum_{j=1}^n(-\alpha_j)[X_{i_j}(\omega)-c_j]\geq0,\]
which implies the first inequality in (\ref{eq:infpi}).  If the common
sign of the $\alpha_jP(X_{i_j})$ is positive, then  
$\sum_{j=1}^n\alpha_jP(X_{i_j})=\infty$, the first inequality in
(\ref{eq:infpi}) is trivially satisfied, and the $\alpha_j$
corresponding to infinite previsions all have the correct signs to be
used in acceptable gambles. It follows that for all real
$c_1,\ldots,c_n$ such that $c_j=P(X_{i_j})$ whenever $P(X_{i_j})$ is finite,
\[\sup_{\omega\in\Omega}\sum_{j=1}^n\alpha_j[X_{i_j}(\omega)-c_j]\geq0,\]
which implies the second inequality in (\ref{eq:infpi}). 

For the ``only if'' direction, assume that the previsions are extended-coherent.
Let $n$ be a finite integer, let $i_1,\ldots,i_n\in I$,
and let $\alpha_1,\ldots,\alpha_n$ be real numbers such that all infinite
values of $\alpha_jP(X_{i_j})$ are positive.  We need to show that 
\begin{equation}\label{eq:infcoh}
\sup_{\omega\in\Omega}\sum_{j=1}^n\alpha_j[X_{i_j}(\omega)-c_j]\geq0,
\end{equation} 
for all real $c_1,\ldots,c_n$ such that
$c_j=P(X_{i_j})$ for all $i$ such that $P(X_{i_j})$ is finite.  If all of the
$P(X_{i_j})$ are finite, then (\ref{eq:infcoh}) follows from
(\ref{eq:infpi}), so assume that at least one $P(X_{i_j})$ is
infinite. It follows that $\sum_{j=1}^n\alpha_jP(X_{i_j})=\infty$,
and (\ref{eq:infpi}) implies that
$\sup_{\omega\in\Omega}\sum_{j=1}^nX_{i_j}(\omega)=\infty$. 
Since, $\sum_{j=1}^n\alpha_jc_j$ is finite, no matter what $c_j$
values are chosen, (\ref{eq:infcoh}) follows.
\end{proof}

Crisma and Gigante (2001) extend
Definition~\ref{def:infpi} to conditional previsions. Their
definition imposes conditions on coherence similar to those of
Regazzini (1987) (both for bounded and for unbounded random variables)
that are designed to regulate the extreme indeterminacy of conditional
previsions given events with zero probability.  We prefer to avoid
such restrictions on the definition of coherence for reasons
illustrated by the following example. 
\begin{example}\label{exa:ciex}
{\rm Let $\Omega$ be the positive integers, and let $P(\cdot)$ be a
coherent$_2$ prevision that assigns probability 0 to every
singleton $\{n\}$ with $n$ a positive integer.  Many such marginal
previsions exist.  Let 
\[X(\omega)=\bfun \omega&\mbox{if $1\leq\omega\leq4$,}\\ 0&\mbox{otherwise,}\efun\]
and let $B=\{1,2,3,4\}$ so that $P(B)=P(X)=0$ and $BX=X$.  The gamble
\begin{equation}\label{eq:ciex}
\alpha X=\alpha(X-0)=\alpha B(X-0) 
\end{equation} 
is acceptable by all definitions of coherence.  However, the
restrictions that Regazzini (1987), Crisma and Gigante (2001), and
others impose would say that it is incoherent to assign $P(X|B)$ a
value outside of the closed interval $[1,4]$.  That is, it is
incoherent to offer the gamble $\alpha B(X-0)$ because
$0\not\in[1,4]$.  But, (\ref{eq:ciex}) shows that $\alpha B(X-0)$ is
already being offered without even contemplating what would be a coherent
value for $P(X|B)$.  Furthermore, for every real $p$, $-\alpha pB$ is
also being offered unconditionally, so that the sum
\begin{equation}\label{eq:ciex2}
\alpha B(X-0)-\alpha pB=\alpha B(X-p)
\end{equation} 
is being offered for every real $p$.  We think it is perfectly reasonable
to assign $P(X|B)$ a value between 1 and 4 if one wishes, but we don't
believe that it should be called incoherent to do otherwise.  After
all, the gambles that would be ruled out by such a declaration of
incoherence are already being offered, as (\ref{eq:ciex2}) illustrates.
}\end{example} 

The following lemma illustrates an intuitive property of infinite
previsions.
\begin{lemma}\label{lem:nonneg}
Let $B$ be an event with $P(B)>0$, and let $X$ and $Y$ be random
variables with coherent$_1$ previsions 
$P(X|B)$ and $P(Y|B)$ respectively.  If $X(\omega)\leq Y(\omega)$ for
all $\omega\in B$, then $P(X|B)\leq P(Y|B)$.
\end{lemma}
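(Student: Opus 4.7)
My plan is to argue by contradiction: assume $P(X|B) > P(Y|B)$, and construct an acceptable gamble (in the sense of Definition~\ref{def:cohinf}) whose supremum over $\omega$ is strictly negative, contradicting coherence$_1$.

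The essential observation is that any conditional-style gamble $\alpha B[Z - c]$ vanishes on $B^c$, so to achieve uniform sure loss we must combine it with a marginal gamble that is strictly negative there. Since $P(B) > 0$, the acceptable gamble $\beta[B - P(B)]$ with $\beta > 0$ equals $-\beta P(B) < 0$ off $B$. I would therefore consider
\[W(\omega) = B(\omega)[X(\omega) - c_X] - B(\omega)[Y(\omega) - c_Y] + \beta[B(\omega) - P(B)],\]
and choose $c_X$, $c_Y$, $\beta$ so that $W$ is also strictly negative on $B$. Using $X \leq Y$ on $B$, one gets $W \leq c_Y - c_X + \beta(1 - P(B))$ there, so it suffices to arrange $c_X - c_Y > \beta(1 - P(B))$.

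When both previsions are finite, Definition~\ref{def:cohinf} forces $c_X = P(X|B)$ and $c_Y = P(Y|B)$, so $c_X - c_Y > 0$ by the contradiction hypothesis and any sufficiently small $\beta > 0$ works (any $\beta > 0$ when $P(B) = 1$). When $P(X|B) = +\infty$, the coefficient $+1$ on the $X$-gamble meets the required sign constraint and $c_X$ is free, so $c_X$ can be driven large; symmetrically, when $P(Y|B) = -\infty$, the coefficient $-1$ on the $Y$-gamble is admissible and $c_Y$ can be driven arbitrarily negative. These mechanisms cover every finite/infinite combination in which $P(X|B) > P(Y|B)$ can hold, namely both finite, $P(X|B)=+\infty$ with $P(Y|B) \in \{\text{finite}, -\infty\}$, and $P(X|B)$ finite with $P(Y|B) = -\infty$.

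The main obstacle is the bookkeeping across these infinite-prevision sub-cases, namely checking that the coefficients $+1$ and $-1$ respect the sign requirements of Definition~\ref{def:cohinf}. The hypothesis $P(B) > 0$ enters exactly once, to force $W < 0$ on $B^c$; this is consistent with Example~\ref{exa:ciex}, which makes clear that monotonicity cannot be expected when $P(B) = 0$.
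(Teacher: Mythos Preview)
Your proposal is correct and follows essentially the same approach as the paper: both argue by contradiction using the combination $B[X-c_X]-B[Y-c_Y]+\gamma[B-P(B)]$ with the sign constraints on the coefficients checked across the finite/infinite cases. The only cosmetic difference is that the paper picks the specific value $\gamma=c_X-c_Y$, which makes the $B$-terms cancel and yields $B(X-Y)+(c_Y-c_X)P(B)$ directly, whereas you leave $\beta$ free and then choose it small enough (or drive $c_X$, $c_Y$) to satisfy the resulting inequality.
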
 
\begin{proof}
Suppose, to the contrary, that $P(X|B)>P(Y|B)$.  Then, in particular,
$P(X|B)>-\infty$ and $P(Y|B)<\infty$.  Coherence$_1$ implies that
\begin{equation}\label{eq:3gam}
\sup_\omega\left\{\alpha B(\omega)[X(\omega)-c_X]
+\beta B(\omega)[Y(\omega)-c_Y]+\gamma[B(\omega)-P(B)]\right\}\geq0,
\end{equation} 
where $c_X=P(X|B)$ if $P(X|B)$ is finite, $c_Y=P(Y|B)$ if $P(Y|B)$ is
finite, $\alpha\geq0$ if $P(X|B)=\infty$, and $\beta\leq0$ if
$P(Y|B)=-\infty$. In (\ref{eq:3gam}), $\alpha$, $\beta$, $c_X$ and
$c_Y$ are otherwise unconstrained real numbers.  Choose $c_X>c_Y$ if
either is unconstrained.  (If both are
constrained, then $c_X>c_Y$ by assumption.) Let $\alpha=1$,
$\beta=-1$, and $\gamma=c_X-c_Y$.  Then (\ref{eq:3gam}) becomes
\[\sup_\omega\left\{B(\omega)X(\omega)-B(\omega)Y(\omega)+(c_Y-c_X)P(B)\right\}\geq0,\]
which is a contradiction because $B(\omega)[Y(\omega)-X(\omega)]\leq0$
for all $\omega$, and $(c_Y-c_X)P(B)<0$.
\end{proof} 

\section{Extension of Coherence$_2$}\label{sec:extend} 
In this section, we extend de Finetti's second coherence criterion in
two ways: we include more general scoring rules, and we accommodate
random variables with infinite previsions.  For scoring rules of the
form (\ref{eq:gdef}) we write
\begin{equation}\label{eq:dsimp}
g(x,q)=\int_x^q(v-x)d\lambda(v),
\end{equation} 
where we use the convention that an integral whose limits are in the wrong
order equals the negative of the integral with the limits in the correct order. 
It follows easily from (\ref{eq:dsimp}) that, if $a$ and $b$
are real numbers, then
\begin{equation} \label{eq:diff}
g(x,a)-g(x,b)=\lambda((a,b))[x-r(a,b,\lambda)],
\end{equation} 
where, for all $a$ and $b$,
\begin{equation}\label{eq:ra2}
r(a,b,\lambda)=\frac{\int_a^bvd\lambda(v)}{\lambda((a,b))}.
\end{equation} 
In equations (\ref{eq:diff}) and (\ref{eq:ra2}), we used the same
convention as above about integrals with limits in the wrong order.  In
particular, $\lambda((a,b))=-\lambda((b,a))$ if $a>b$. 

Equation~\ref{eq:diff} makes it clear that, if $P(X)$ is infinite,
then $P[g(X,q)]$ is going to be infinite for all $q$.  This is why
Definition~\ref{def:4} includes the clause that $P(X)$ be finite
before requiring that $P[g(X,q)]$ be minimized at $a=P(X)$.
Nevertheless, strictly proper scoring rules can still be used to
assess coherence in the spirit of coherence$_2$.  The following is
our generalization of coherence$_2$ that applies both with general scoring
rules and with infinite previsions.

\begin{definition}\label{def:coh3}
{\rm Let ${\cal C}$ be a class of strictly proper scoring rules.  Let
$\{(X_i,B_i):i\in I\}$ be a collection of pairs each consisting of a random
variable $X_i$ and a nonempty event $B_i$ with
corresponding conditional forecasts $\{p_i:i\in I\}$.  The forecasts are {\em
  coherent$_3$} if, for every finite subset $\{i_{j}:
j=1,\ldots,n\}\subseteq I$, every set of scoring rules
$\{g_1,\ldots,g_n\}\subseteq{\cal C}$, and every set $\{q_1,\ldots,q_n\}$ of
alternative forecasts,
\[\inf_\omega\sum_{j=1}^nB_{i_j}(\omega)\left[g_j(X_{i_j}(\omega),c_j)-g_j(X_{i_j}
(\omega),q_j)\right]\leq0,\]
where $c_j=p_{i_j}$ for all $j$ such that $p_{i_j}$ is finite, and
$c_j$ is finite and between $q_j$ and $p_{i_j}$ for all $j$ such that
$p_{i_j}$ is infinite. 
That is, no finite rival set of forecasts can provide a uniformly smaller sum
of scores than the original forecasts.}
\end{definition} 

\begin{theorem}\label{thm:equiv}
Let $\C$ be a class of scoring rules of the form (\ref{eq:gdef}).
A collection of conditional previsions is coherent$_1$ if and only if
it is coherent$_3$. 
\end{theorem}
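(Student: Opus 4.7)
The plan is to use equation~(\ref{eq:diff}) as the bridge between acceptable gambles in the coherent$_1$ sense and score-difference gambles in the coherent$_3$ sense. That identity rewrites $g_j(X_{i_j},c_j)-g_j(X_{i_j},q_j)$ as $\lambda_j((c_j,q_j))[X_{i_j}-r_j]$ with $r_j=r(c_j,q_j,\lambda_j)$ always lying between $c_j$ and $q_j$, so a summed score difference is, up to a correction term $\sum_j \lambda_j((c_j,q_j))(c_j-r_j)B_{i_j}$, a linear combination of the elementary gambles $B_{i_j}[X_{i_j}-c_j]$ appearing in coherent$_1$.

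For coherent$_1 \Rightarrow$ coherent$_3$, given rivals $\{q_j\}$ and admissible $\{c_j\}$, I would multiply this identity by $B_{i_j}$, sum over $j$, and split
\[\sum_j B_{i_j}\bigl[g_j(X_{i_j},c_j)-g_j(X_{i_j},q_j)\bigr]=\sum_j \lambda_j((c_j,q_j))B_{i_j}[X_{i_j}-c_j]+\sum_j \lambda_j((c_j,q_j))(c_j-r_j)B_{i_j}.\]
Two checks then make the argument go through. First, setting $\beta_j=-\lambda_j((c_j,q_j))$, the admissibility condition on $c_j$ in coherent$_3$ (that $c_j$ lie between $q_j$ and $p_{i_j}$ when $p_{i_j}$ is infinite) forces $\beta_j\geq 0$ when $p_{i_j}=\infty$ and $\beta_j\leq 0$ when $p_{i_j}=-\infty$, so $\sum_j \beta_j B_{i_j}[X_{i_j}-c_j]$ is an acceptable coherent$_1$ gamble. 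Second, because $r_j$ lies between $c_j$ and $q_j$, the product $\lambda_j((c_j,q_j))(c_j-r_j)$ is nonpositive, so the second sum is pointwise $\leq 0$. Negating the identity and applying coherent$_1$ to the first sum yields $\sup_\omega[-(\text{score difference})]\geq 0$, i.e.\ $\inf_\omega(\text{score difference})\leq 0$.

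For the converse, given an acceptable gamble $\Gamma=\sum_j \alpha_j B_{i_j}[X_{i_j}-c_j]$, the plan is to approximate $\Gamma$ by a score-difference gamble. For $\epsilon>0$ and each $j$ with $\alpha_j\neq 0$, set $q_j=c_j-\mathrm{sgn}(\alpha_j)\epsilon$ and let $\lambda_j$ have density $|\alpha_j|/\epsilon$ on the interval between $c_j$ and $q_j$ (extended elsewhere to any measure mutually absolutely continuous with Lebesgue and finite on bounded intervals). Then $\lambda_j((c_j,q_j))=-\alpha_j$ and $r_j=c_j-\mathrm{sgn}(\alpha_j)\epsilon/2$, so (\ref{eq:diff}) reduces $B_{i_j}[g_j(X_{i_j},c_j)-g_j(X_{i_j},q_j)]$ to $-\alpha_j B_{i_j}[X_{i_j}-c_j]-(\epsilon/2)|\alpha_j|B_{i_j}$. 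The sign constraints on $\alpha_j$ coming from coherent$_1$ make $q_j$ admissible under coherent$_3$. Summing over $j$ and invoking coherent$_3$ gives $\sup_\omega[\Gamma + (\epsilon/2)\sum_j|\alpha_j|B_{i_j}]\geq 0$, and since $0\leq B_{i_j}\leq 1$ this yields $\sup_\omega \Gamma \geq -(\epsilon/2)\sum_j|\alpha_j|$. Letting $\epsilon\downarrow 0$ finishes coherent$_1$.

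The main obstacle in both directions is the structural fact that $r_j\neq c_j$ except when $q_j=c_j$, which makes the identity trivial. The forward direction sidesteps this because $c_j-r_j$ automatically has the favorable sign, making the correction term harmless. The converse requires the $\epsilon$-approximation step: shrinking the interval $(c_j,q_j)$ while scaling $\lambda_j$'s density so as to hold the nominal coefficient $-\alpha_j$ fixed, and then verifying that the resulting bias is $O(\epsilon)$ and washes out in the limit.
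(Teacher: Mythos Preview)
Your forward direction is essentially the paper's argument: both use (\ref{eq:diff}) to write the summed score difference as $\sum_j \lambda_j((c_j,q_j))B_{i_j}[X_{i_j}-c_j]$ plus a correction that is pointwise nonpositive, and then apply coherent$_1$ to the first piece after checking that the coefficients $-\lambda_j((c_j,q_j))$ carry the right signs at the infinite previsions.

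Your converse, however, has a real gap. The class $\C$ in the theorem is an \emph{arbitrary} (nonempty) collection of scoring rules of the form (\ref{eq:gdef}); coherent$_3$ is defined relative to that given $\C$. In your construction you manufacture, for each $j$ and each $\epsilon$, a measure $\lambda_j$ with density $|\alpha_j|/\epsilon$ on a chosen interval, and then invoke coherent$_3$ for the corresponding $g_j$. Nothing guarantees that these scoring rules belong to $\C$. If, for instance, $\C$ consists of a single rule (say Brier score, $\lambda=$ Lebesgue), your $g_j$ are not available and the argument does not start.

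The paper circumvents this by fixing one scoring rule $g\in\C$ with measure $\lambda$ and using it for every summand. The freedom you obtained by rescaling densities is recovered instead by choosing the rival forecasts: for small $z>0$ one defines $q_j(z)$ via $\lambda((c_j,q_j(z)))=-z\alpha_j$, which is possible by continuity of $\lambda$. This makes the score difference equal to $-z\sum_j\alpha_j B_{i_j}[X_{i_j}-c_j]$ plus an $O(z)$ correction in which each term $z\alpha_j[c_j-r(c_j,q_j(z),\lambda)]$ has the right sign and magnitude at most $z|\alpha_j||c_j-q_j(z)|\to 0$. Using the incoherent$_1$ bound $\sup_\omega\sum_j\alpha_j B_{i_j}[X_{i_j}-c_j]=-\epsilon<0$ then gives a strictly positive lower bound for the score difference when $z$ is small. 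Your $\epsilon$-limit idea survives in spirit, but the scaling must be placed on the \emph{rival forecasts} $q_j$, not on the scoring rules themselves.
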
 
\begin{proof}
For the ``only if'' direction, assume that the conditional previsions are
coherent$_1$.  We want to show that no 
rival set of previsions provides uniformly smaller sum of
scores than the original previsions. That is, for each $(X_1,
B_1),\ldots,(X_n,B_n)$ with conditional previsions $p_1,\ldots,p_n$ and each set
$\{g_1,\ldots,g_n\}\subseteq\C$ of scoring rules and each set
$\{q_1,\ldots,q_n\}$ of rival previsions, we must show that
\begin{equation}\label{eq:equivgoal1}
\inf_\omega\sum_{i=1}^nB_i(\omega)[g_i(X_i(\omega),c_i)-g_i(X_i(\omega),q_i)]
\leq0, 
\end{equation}
where $c_i=p_i$ for all $i$ such that $p_i$ is finite, and
$c_i$ is finite and between $q_i$ and $p_i$ for all $i$ such that
$p_i$ is infinite. From (\ref{eq:diff}), 
\begin{equation}  \label{eq:scoregam1}
\sum_{i=1}^nB_i[g_i(X_i,c_i)-g_i(X_i,q_i)]=\sum_{i=1}^n\lambda_i((c_i,q_i))B_i[X_i-
r(c_i,q_i,\lambda_i)].
\end{equation} 
Because $\lambda_i((c_i,q_i))$ and $r(c_i,q_i,\lambda_i)-c_i$ have
the same sign,
\[\sum_{i=1}^n\lambda_i((c_i,q_i))B_i[r(c_i,q_i,\lambda_i)-c_i]\geq0.\]
Hence, the right-hand side of (\ref{eq:scoregam1}) is less
than or equal to
\begin{equation}\label{eq:scoregam4}
\sum_{i=1}^n\lambda_i((c_i,q_i))B_i[X_i-c_i].
\end{equation} 
The infimum of (\ref{eq:scoregam1}) is then less than or equal to
the infimum of (\ref{eq:scoregam4}).  Also
\begin{eqnarray*}
\inf_\omega\sum_{i=1}^n\lambda_i((c_i,q_i))B_i(\omega)[X_i(\omega)-c_i]&=&
-\sup_\omega\sum_{i=1}^n-\lambda_i((c_i,q_i))B_i(\omega)[X_i(\omega)-c_i]\\
&\leq&0,
\end{eqnarray*} 
where the inequality follows from coherence$_1$ of the previsions.
Hence the rival previsions do not provide uniformly smaller sum of scores
than the original previsions. 

For the ``if'' direction, we prove the contrapositive.  Assume that
the conditional previsions are incoherent$_1$.  If one of the
previsions is the 
wrong value for a constant random variable, the result is trivial, so
assume that each random variable takes at least two distinct values.
We want to find a finite set of random variable/event pairs and
corresponding conditional previsions together with a
rival set of previsions such that the provide uniformly smaller sum of
scores than the original set. That is, we need $(X_1,B_1),\ldots,(X_n,B_n)$ with
conditional previsions $p_1,\ldots,p_n$ a rival set of previsions
$q_1,\ldots,q_n$, and a set $g_1,\ldots,g_n$ of
scoring rules from $\C$ such that
\begin{equation}\label{eq:equivgoal2}
\inf_\omega\sum_{i=1}^nB_i(\omega)[g_i(X_i(\omega),c_i)-g_i(X_i(\omega),q_i)]>0,
\end{equation}
where $c_i=p_i$ for all $i$ such that $p_i$ is finite, and $c_i$ is
finite and between $q_i$ and $p_i$ for all $i$ such that $p_i$ is
infinite.  In principle, some of the rival $q_i$ could be infinite,
but the rivals that we construct below will all be finite.

By incoherence$_1$, there exist
$(X_1,B_1),\ldots,(X_n,B_n)$ with conditional previsions $p_1,\ldots,p_n$,
$\alpha_1,\ldots,\alpha_n$ and $\epsilon>0$ such that 
\begin{equation}\label{eq:scoregam2}
\sup_\omega\sum_{i=1}^n\alpha_iB_i(\omega)[X_i(\omega)-c_i]=-\epsilon,
\end{equation} 
where $c_i$ is finite for all $i$, $c_i=p_i$ for all $i$ such that
$p_i$ is finite, and $\alpha_i$ 
has the same sign as $p_i$ for all $i$ such that $p_i$ is infinite.
Without loss of generality, we can assume that $\max_i|\alpha_i|=1$. Let
$g$ be a scoring rule in $\C$ with
corresponding measure $\lambda$.  Let $g_i=g$ for all $i$.  Define
\[z_0=\min\left\{\min_{i=1,\ldots,n}\{\lambda((-\infty,c_i)):\alpha_i>0\},
\min_{i=1,\ldots,n}\{\lambda((c_i,\infty)):\alpha_i<0\}\right\},\]
so that $z_0>0$.
For each $z\in(0,z_0)$, define $q_i(z)$ by the equation
\[\lambda((c_i,q_i(z)))=-z\alpha_i,\]
which is possible because $\lambda((c_i,q))$ is continuous in $q$. Define
\begin{equation}\label{eq:scoregam6}
\ell(z)=\inf_\omega\sum_{i=1}^nB_i(\omega)
[g(X_i(\omega),c_i)-g(X_i(\omega),q_i(z))].
\end{equation} 
By construction $c_i-r(c_i,q_i(z),\lambda)$, $c_i-q_i(z)$, and
$r(c_i,q_i(z),\lambda)-q_i(z)$ have the same sign as $\alpha_i$ for
each $i$, and $c_i-q_i(z)$ has the largest absolute value of the three
differences.  This means that, for all $i$ such that $p_i$ is infinite
and all $z$, $c_i$ is between $q_i(z)$ and $p_i$. The remainder of the  
proof consists of showing that there exists 
$z\in(0,z_0)$ such that $\ell(z)>0$.  The desired rival previsions are
then $q_1(z),\ldots,q_n(z)$.  From (\ref{eq:diff}),  
\begin{eqnarray}\label{eq:scoregam7}\nonumber
\sum_{i=1}^nB_i[g(X_i,c_i)-g(X_i,q_i(z))]&=&\sum_{i=1}^n\lambda((c_i,q_i(z)))
B_i[X_i-r(c_i,q_i(z),\lambda)]\\
&=&-\sum_{i=1}^nz\alpha_iB_i[X_i-c_i]\nonumber
-\sum_{i=1}^nz\alpha_iB_i[c_i-r(c_i,q_i(z),\lambda)]\\
&\geq&z\epsilon-z\sum_{i=1}^n\alpha_iB_i[c_i-q_i(z)].
\end{eqnarray} 
Since, for all $i$, $q_i(0)=c_i$ and $q_i(z)$ is continuous in $z$,
there exists $z_1>0$ such that, for all $z\in(0,z_1)$ and all $\omega$,
\begin{equation}\label{eq:scoregam3}
\sum_{i=1}^n\alpha_iB_i(\omega)[c_i-q_i(z)]<\frac{\epsilon}{2}.
\end{equation} 
Choose $z\in(0,\min\{z_0,z_i\})$. Then combine (\ref{eq:scoregam6}),
(\ref{eq:scoregam7}) and (\ref{eq:scoregam3}) to conclude that
\[\ell(z)\geq z\frac{\epsilon}{2}>0,\]
which completes the proof.
\end{proof}

\section{Prevision and Expectation}\label{sec:prevexp}
The expectation of a random variable $X$ defined on $\Omega$ is
usually defined as the integral of $X$ over the set $\Omega$ with
respect to the underlying probability measure defined on subsets $\Omega$.  In
the countably additive setting, such integrals can be defined (except
for certain cases involving $\infty-\infty$)
uniquely from a probability measure on $\Omega$.
Dunford and Schwartz (1958, Chapter III) give a detailed analysis of
integration, with respect to finitely additive measures, that attempts to
replicate the uniqueness of integrals.  Their analysis requires
additional assumptions if one wishes to integrate
unbounded random variables.  

An alternative to defining integrals with respect to specific measures
is to define integrals as special types of linear functionals.  Then
measures can be constructed from the integrals.  (The integral of the
indicator of a set is the measure of the set.) This is the approach
used in the study of the {\em Daniell integral}. (See Royden, 1968, Chapter
13. Regazzini, 1987 and Williams, 2007 takes a similar approach for bounded random
variables only.) De Finetti's concept of 
prevision turns out to be a finitely additive generalization of the
Daniell integral.  (See Definition~\ref{def:daniell} below.) One major
difference between the finitely 
additive Daniell integral and the theory developed by Dunford and
Schwartz is that coherence$_1$ is the only assumption needed to define
a finitely additive Daniell integral on an arbitrary space of random
variables, including unbounded random variables and linear
combinations of random variables with infinite integrals.  Another difference
is that multiple finitely additive Daniell integrals can lead to the same
finitely additive probability distribution on
$\Omega$.  Put another way, the finitely
additive Daniell integral is not uniquely determined by its
corresponding finitely additive probability.

In this paper, we take the approach of defining finitely additive
expectations and probabilities in terms of finitely additive Daniell
integrals, rather than integrals of the sort  developed by Dunford and
Schwartz (1958).  This section is devoted to deriving and
illustrating the properties of finitely additive Daniell integrals and
their relation to coherent$_1$ previsions.

\begin{definition}\label{def:daniell}{\rm 
Let $\L$ be a linear space of real-valued functions defined on
$\Omega$ that contains all constant functions, and
let $L$ be an extended-real-valued functional defined on $\L$. 
If ($X,Y\in\L$ and $X\leq Y$) implies $L(X)\leq L(Y)$, we say that $L$ is
{\em nonnegative}.  
We call $L$ an {\em extended-linear functional} on $\L$, if, for all
real $\alpha,\beta$ and all $X,Y\in\L$,
\begin{equation}\label{eq:el}
L(\alpha X+\beta Y)=\alpha L(X)+\beta L(Y),
\end{equation} 
whenever the arithmetic on the right-hand side of (\ref{eq:el}) is
well defined (i.e., not $\infty-\infty$) and where $0\times\pm\infty=0$.
A nonnegative extended-linear functional is called a {\em finitely
  additive Daniell integral}. (See Schervish et al, 2008a.)
If $L(1)=1$, we say that $L$ is {\em normalized}.  A normalized
finitely additive Daniell integral is called a {\em finitely
  additive expectation}.}
\end{definition} 

The following simple result contains the primary justification for the
names {\em finitely additive Daniell integral} and {\em finitely additive
  expectation}.
\begin{lemma}\label{lem:fap}
Let $L$ be a finitely additive Daniell integral on $\L$, where $\L$ contains
indicators of some subsets of $\Omega$.  
\begin{enumerate} 
\item The restriction of $L$
to those subsets of $\Omega$ whose indicators are in $\L$ is a finitely
additive measure.  If $L(1)=1$, then $L$ is a finitely additive
probability.\label{part:fap1} 
\item The restriction of $L$ to the simple functions in $\L$ matches
  the usual definition of integral of a simple function with respect
  to the measure in part~\ref{part:fap1}.
\end{enumerate} 
\end{lemma}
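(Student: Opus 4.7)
The plan is to reduce both parts of the lemma to the two defining properties of a finitely additive Daniell integral---monotonicity and extended-linearity---together with the convention $0\times\pm\infty=0$.

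For part~\ref{part:fap1}, I would first establish $L(0)=0$ by taking $\alpha=\beta=0$ in~(\ref{eq:el}) applied to any fixed $X\in\L$: the right-hand side is $0\cdot L(X)+0\cdot L(X)=0$ under the stated convention, while the left-hand side is $L(0)$. Monotonicity then yields $L(B)\geq L(0)=0$ for every event $B$ whose indicator lies in $\L$, so the restriction of $L$ to indicators is $[0,\infty]$-valued. For disjoint $B_1,B_2$ whose indicators, and the indicator of whose union, all lie in $\L$, the pointwise identity (of indicators) $B_1\cup B_2=B_1+B_2$ combined with extended-linearity gives $L(B_1\cup B_2)=L(B_1)+L(B_2)$; nonnegativity of both summands excludes $\infty-\infty$, so~(\ref{eq:el}) is legitimately applicable. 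Together with $L(\emptyset)=L(0)=0$, this establishes finite additivity, and the normalization $L(1)=1$ promotes the resulting set function to a finitely additive probability.

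For part 2, I would take a simple function $X\in\L$ in canonical form $X=\sum_{j=1}^k x_j A_j$ with the $A_j$ disjoint and their indicators in $\L$ (which is the precise condition needed for the ``usual'' integral to be defined using the measure from part~\ref{part:fap1}). Iterating~(\ref{eq:el}) gives $L(X)=\sum_{j=1}^k x_j L(A_j)$, which is exactly the standard integral of $X$ against the measure constructed in part~\ref{part:fap1}.

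The only obstacle worth flagging is the well-definedness proviso in~(\ref{eq:el}): extended-linearity may be invoked only when the right-hand side avoids $\infty-\infty$. In part 1 this is automatic since indicator integrals are nonnegative. In part 2 the required well-definedness coincides with the condition under which the canonical-form expression $\sum_j x_j L(A_j)$ is itself well-defined in the extended reals, so no verification is needed beyond what the ``usual'' definition of the simple-function integral already presupposes.
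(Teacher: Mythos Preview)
Your proposal is correct and follows essentially the same approach as the paper's proof: both parts are reduced to extended-linearity plus nonnegativity, with the disjoint-union identity $B_1\cup B_2=B_1+B_2$ for indicators doing the work in part~\ref{part:fap1} and iterated linearity on a canonical representation in part~2. You are slightly more explicit than the paper in deriving $L(0)=0$ and in flagging the $\infty-\infty$ proviso, but the argument is the same.
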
 
\begin{proof} 
\begin{enumerate} 
\item Because the constant 1 is in $\L$, the 
indicator of $\Omega$ itself is in $\L$. If $A$ and $B$ are disjoint,
the indicator of $A\cup B$ is $A+B$.  If $A$, $B$ and $A\cup B$ are
all in $\L$, then $L(A)$ and $L(B)$ are both nonnegative, and
linearity give $L(A\cup B)=L(A)+L(B)$, and the sum is well defined.
So $L$ is a finitely additive measure.
If $L(1)=1$, nonnegativity implies that $L$, restricted to the set of
indicators of events, is a finitely additive probability on $\Omega$.
\item Let $X=\sum_{i=1}^n\alpha_i A_i$ be a simple function with each
  $A_i\in\L$, all $A_i$ disjoint, and all $\alpha_i$ distinct. Then
  linearity gives $L(X)=\sum_{i=1}^n\alpha_iL(A_i)$, which is well
  defined and which matches the usual definition of the integral of $X$ with
  respect to the measure $L$. 
\end{enumerate} 
\end{proof}

Finitely additive expectations are like integrals in several ways,
e.g., the two parts of Lemma~\ref{lem:fap} as well as the linearity in
Lemma~\ref{lem:missing} below.
Another way in which finitely additive expectations are like integrals
is continuity with respect to the uniform metric.  The proof of the following
result is trivial and omitted.
\begin{proposition} \label{lem:unif}
Let $X$ and $Y$ be elements of a linear space $\L$ of real-valued
functions defined on $\Omega$.  Suppose that
$\sup_\omega|X(\omega)-Y(\omega)|\leq\epsilon$. Then, for each
finitely additive expectation $L$ on $\L$, either
$|L(X)-L(Y)|\leq\epsilon$ or $L(X)$ and $L(Y)$ equal the same infinite value.
\end{proposition}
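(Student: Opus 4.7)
The plan is to exploit the pointwise inequality $Y-\epsilon\leq X\leq Y+\epsilon$ (which is just a restatement of $\sup_\omega|X(\omega)-Y(\omega)|\leq\epsilon$) together with the nonnegativity and extended-linearity of $L$. First I note that because $\L$ is a linear space containing all constants and containing $Y$, the two functions $Y-\epsilon$ and $Y+\epsilon$ are both in $\L$, so $L(Y-\epsilon)$ and $L(Y+\epsilon)$ are defined, and nonnegativity gives
\[
L(Y-\epsilon)\leq L(X)\leq L(Y+\epsilon).
\]

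Next I would split on the value of $L(Y)$. If $L(Y)$ is finite, then the extended-linearity identity $L(Y\pm\epsilon)=L(Y)\pm\epsilon L(1)=L(Y)\pm\epsilon$ is well defined (no $\infty-\infty$), so the display above becomes $L(Y)-\epsilon\leq L(X)\leq L(Y)+\epsilon$, which says both that $L(X)$ is finite and that $|L(X)-L(Y)|\leq\epsilon$. If $L(Y)=+\infty$, then $L(Y-\epsilon)=L(Y)+L(-\epsilon)=\infty$ (again well defined), forcing $L(X)=+\infty$; the symmetric argument with $L(Y+\epsilon)$ handles $L(Y)=-\infty$. In each case the conclusion of the proposition holds.

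There is no real obstacle here. The only point that requires a moment's thought is that the extended-linearity rule in Definition~\ref{def:daniell} is only asserted when the right-hand side is not of the form $\infty-\infty$; one must check that in every case above (adding or subtracting the finite constant $\epsilon$ from $L(Y)$) this caveat is harmless, which it is. This is presumably why the authors call the proof trivial and omit it.
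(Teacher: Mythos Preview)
Your proof is correct and is exactly the natural argument; the paper itself omits the proof as trivial, and what you wrote is precisely the routine verification the authors are gesturing at. The only thing worth noting is that you may as well handle all three cases for $L(Y)$ uniformly: since $\epsilon$ is finite, $L(Y)\pm\epsilon$ is always well defined in the extended reals, so $L(Y)-\epsilon\leq L(X)\leq L(Y)+\epsilon$ immediately gives both conclusions without a case split.
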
 
A special application of Proposition~\ref{lem:unif} is to a bounded function
$X$.  Every bounded function can be approximated arbitrarily closely
by simple functions.  So long as all of the approximating simple
functions are in $\L$, their finitely additive expectations will be
arbitrarily close to the expectation of $X$.  For example, if $\L$ is
the set of functions that are measurable with respect to a
$\sigma$-field, then  every bounded function in $\L$ is uniformly
approximable by simple functions in $\L$.
The expectations of all bounded random variables are then uniquely
determined from the finitely additive probability on $\L$.
Hence, when the finitely additive expectation defined here is 
restricted to bounded functions measurable with respect to a
$\sigma$-field, it is the same as the definition of integral developed
by Dunford and Schwartz (1958), and it is the same as the integral
used by Dubins (1975) in his results about disintegrability.

On a linear space, coherent$_1$ previsions are the same as finitely
additive expectations.  One direction is simpler to prove than the other.
\begin{lemma}\label{lem:faecp}
Let $L$ be a finitely additive expectation on a linear space
$\L=\{X_i:i\in I\}$. Then $L$ is a coherent$_1$ marginal prevision.
\end{lemma}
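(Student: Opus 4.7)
The plan is to take a finite subcollection $X_{i_1},\dots,X_{i_n}\in\L$ with real coefficients $\alpha_1,\dots,\alpha_n$ (satisfying the sign constraints of Definition~\ref{def:cohinf}) and real numbers $c_1,\dots,c_n$ (with $c_j=L(X_{i_j})$ whenever that value is finite), form the gamble
\[W=\sum_{j=1}^n\alpha_j(X_{i_j}-c_j),\]
and show directly that $\sup_\omega W(\omega)\geq 0$. Since $\L$ is a linear space containing constants, $W\in\L$, so $L(W)$ is defined. The core idea is that the nonnegativity of $L$ converts a strict negative upper bound on $W$ into a strict negative bound on $L(W)$, which will contradict what linearity forces $L(W)$ to be.

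First I would handle the case in which every $L(X_{i_j})$ is finite. Here linearity gives $L(W)=\sum_j\alpha_j L(X_{i_j})-\sum_j\alpha_j c_j=0$. If one assumes $\sup_\omega W(\omega)=-\epsilon<0$, then $W\leq -\epsilon\cdot 1$ pointwise; normalization $L(1)=1$ together with nonnegativity gives $L(W)\leq -\epsilon<0$, contradicting $L(W)=0$.

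Next I would treat the case in which some $L(X_{i_j})$ are infinite. The sign constraints in Definition~\ref{def:cohinf} are precisely what is needed so that every term $\alpha_j L(X_{i_j}-c_j)=\alpha_j L(X_{i_j})-\alpha_j c_j$ is either finite or equal to $+\infty$ (never $-\infty$, and with the $0\times\pm\infty=0$ convention handling $\alpha_j=0$). Hence the sum defining $L(W)$ via extended linearity is well defined in $(-\infty,+\infty]$. If at least one term is $+\infty$, then $L(W)=+\infty$, which is incompatible with any bound $W\leq -\epsilon$ by the same nonnegativity argument as before. If every term is finite, then $\alpha_j=0$ for every $j$ with $L(X_{i_j})$ infinite, and the argument reduces to the first case.

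The main obstacle is purely bookkeeping: one must verify that the extended-linearity clause of Definition~\ref{def:daniell} actually applies to $W$, i.e., that the right-hand side of (\ref{eq:el}) never encounters $\infty-\infty$. This is exactly what the sign restrictions on the $\alpha_j$ in coherence$_1$ guarantee, so the match between Definitions~\ref{def:cohinf} and~\ref{def:daniell} is what makes the implication clean. No subtler approximation or extension arguments are needed; the converse direction (building $L$ from a coherent$_1$ prevision) is the harder one and is presumably treated separately.
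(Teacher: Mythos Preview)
Your proof is correct and follows essentially the same route as the paper's: assume the supremum of the gamble is strictly negative, use nonnegativity and $L(1)=1$ to force $L$ of the gamble below zero, then invoke extended linearity (well defined precisely because of the sign constraints on the $\alpha_j$) to show that value must be finite and nonnegative or $+\infty$, a contradiction. The only cosmetic difference is that you split into the all-finite versus some-infinite cases and keep the constants $c_j$ inside $W$, whereas the paper treats both cases at once by moving $\sum_j\alpha_jc_j$ to the other side; the substance is identical.
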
 
\begin{proof} 
Suppose, 
to the contrary, that a finitely additive expectation $L$ is
incoherent$_1$ when used as a marginal prevision.  From the definition
of finitely additive expectation, the domain of $L$ is a linear space
that contains all constants.  Incoherence$_1$ implies that there exist
$i_1,\ldots,i_n\in I$, real numbers $\alpha_1,\ldots,\alpha_n$, and
real numbers $c_1,\ldots,c_n$ such that $\alpha_j$ has the same
sign as $L(X_{i_j})$ when $L(X_{i_j})$ is infinite,
$c_j=L(X_{i_j})$ when $L(X_{i_j})$ is finite, and
\[\sup_\omega\sum_{j=1}^n\alpha_j[X_{i_j}(\omega)-c_j]<0.\]
It follows that, there is $\epsilon>0$ such that
\[\sup_\omega\sum_{j=1}^n\alpha_jX_{i_j}(\omega)=-\epsilon+\sum_{j=1}^n\alpha_jc_j.\]
By nonnegativity and linearity of $L$, 
\begin{equation}\label{eq:lbound}
L\left(\sum_{j=1}^n\alpha_jX_{i_j}\right)\leq-\epsilon+\sum_{j=1}^n\alpha_jc_j.
\end{equation} 
But
\begin{equation}\label{eq:psum}
L\left(\sum_{j=1}^n\alpha_jX_{i_j}\right)=\sum_{j=1}^n\alpha_jL(X_{i_j}),
\end{equation} 
because the arithmetic on the right-hand side of (\ref{eq:psum}) is well defined
(positive if infinite).  A $+\infty$ value on the right-hand side of
(\ref{eq:psum}) would contradict (\ref{eq:lbound}) as would a finite value.
\end{proof} 

The other direction requires an additional result that is
useful in its own right.
\begin{lemma}\label{lem:missing}
Let $\D=\{X_i:i\in I\}$ be a set of random variables that contains all
constants, and let $Q$ be a coherent$_1$ prevision on $\D$.  Let
$\L$ be that part of the linear span of $\D$ that consists of linear
combinations of the form $Y=\sum_{j=1}^n\alpha_jX_{i_j}$ where each
$X_{i_j}\in\D$, and $p_Y=\sum_{j=1}^n\alpha_jQ(X_{i_j})$ is well-defined.
Then $P(Y)=p_Y$ for all $Y\in\L$ is well-defined, and it is the unique
coherent$_1$ extension of $Q$ to $\L$.
\end{lemma}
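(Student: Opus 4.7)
The plan is to prove three things in order: (i) that $p_Y$ depends only on $Y$ and not on the representation (well-definedness), (ii) that the resulting function $P(Y)=p_Y$ is a coherent$_1$ extension of $Q$ to $\L$, and (iii) that no other coherent$_1$ extension exists. Step (i) makes $P$ a legitimate function, step (ii) makes it a prevision extending $Q$, and step (iii) delivers the uniqueness assertion.

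For (i), I would take two representations $Y=\sum_j\alpha_jX_{i_j}=\sum_j\beta_jX_{i_j}$ on a common index set (padding with zeros) and study the identity $\sum_j\gamma_jX_{i_j}\equiv0$ with $\gamma_j=\alpha_j-\beta_j$. When both $p_Y^\alpha$ and $p_Y^\beta$ are finite, necessarily $\alpha_j=\beta_j=0$ on indices where $Q(X_{i_j})$ is infinite, so coherence$_1$ applied with coefficients $\gamma_j$ and with $-\gamma_j$ yields $\sum_j\gamma_jQ(X_{i_j})\leq 0$ and $\geq 0$. If instead one sum, say $p_Y^\alpha$, is $+\infty$, the combined sign patterns of $\alpha_j$ and $\beta_j$ force $\gamma_j$ to satisfy the acceptability sign restrictions for $Q$; I would then exploit the freedom to choose $c_j$ on infinite-prevision indices (where some $\gamma_j\neq 0$) so as to make $\sum_j\gamma_jc_j$ arbitrarily large, contradicting $\sup_\omega\sum_j\gamma_j[X_{i_j}(\omega)-c_j]\geq 0$ unless $p_Y^\beta=+\infty$ as well. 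The case $p_Y^\alpha=-\infty$ is symmetric.

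For (ii), I would take any candidate acceptable gamble $\sum_k\mu_k[Y_k-d_k]$ on $\L$, expand each $Y_k=\sum_j\alpha_{kj}X_{i_{kj}}$, and collect terms by $X_i$. The aggregated coefficients $\delta_i=\sum_k\mu_k\alpha_{ki}$ inherit the appropriate sign restrictions with respect to $Q$ (checked case by case from those on $\mu_k$), and the constants align so that $\sum_k\mu_kd_k=\sum_i\delta_ic_i$ with $c_i=Q(X_i)$ on finite-prevision indices. Coherence$_1$ of $Q$ on $\D$ then gives the required $\sup_\omega\sum_i\delta_i[X_i(\omega)-c_i]\geq0$. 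That $P$ extends $Q$ is immediate from the one-term representation $X_i=1\cdot X_i$. For (iii), I let $P'$ be any coherent$_1$ extension and show $P'(Y)=p_Y$ by a Dutch-book argument: trading $Y$ at the candidate price $v=P'(Y)$ against trades of each $X_{i_j}$ at $Q(X_{i_j})$ telescopes (when $p_Y$ is finite) to the constant $v-p_Y$, so $v\neq p_Y$ yields uniform sure loss; when $p_Y$ is infinite, the same telescoping combined with the freedom to pick $c_j$ on the infinite-prevision components drives the resulting constant below zero for any finite or oppositely signed $v$, forcing $P'(Y)=p_Y$.

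The main obstacle throughout is the careful bookkeeping of sign constraints imposed by infinite previsions. In each combinatorial step — decomposing two representations of $Y$ in (i), aggregating coefficients when passing from $\L$ back to $\D$ in (ii), and assembling telescoping gambles in (iii) — one must verify that the coefficients and the prices $c_j$ produced genuinely satisfy the acceptability sign conditions demanded by Definition~\ref{def:cohinf}, so that the ensuing inequality (or its violation) is a real application of coherence$_1$ and not a spurious one that ignores the directional restrictions on gambles with infinite previsions.
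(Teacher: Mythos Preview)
Your three-step plan is correct and complete, but it takes a different route from the paper. The paper invokes Lemma~\ref{lem:equivecc1} and works throughout with \emph{extended-coherence} (Definition~\ref{def:infpi}) rather than directly with coherence$_1$. In that language the arguments collapse: for well-definedness, two representations of $Y$ give $\sum_j\alpha_jX_{i_j}-\sum_k\beta_kX_{\ell_k}\equiv0$, while the corresponding prevision combination $\sum_j\alpha_jQ(X_{i_j})-\sum_k\beta_kQ(X_{\ell_k})$ is a well-defined extended real (one checks that whenever the two sides of (\ref{eq:tworep}) differ, the infinite terms in the difference all carry the same sign), so a nonzero value immediately violates (\ref{eq:infpi}). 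Coherence of $P$ and uniqueness are handled the same way, each in two or three lines. The detour through extended-coherence absorbs exactly the sign bookkeeping that you identify as the main obstacle; your direct coherence$_1$ argument does that bookkeeping by hand in each step.

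One place where your sketch deserves more care is the claim in step~(ii) that ``the constants align so that $\sum_k\mu_kd_k=\sum_i\delta_ic_i$.'' After aggregating coefficients, you need to \emph{solve} for the $c_i$ on infinite-prevision indices, and it is not automatic that such a solution exists. It does, but the argument splits: if some infinite-prevision index $i_0$ has $\delta_{i_0}\neq0$ you have a free parameter with nonzero coefficient; if instead $\delta_i=0$ on every infinite-prevision index, then (because each nonzero $\mu_k\alpha_{ki}$ on such an index is nonnegative) every $k$ with $\mu_k\neq0$ must have $P(Y_k)$ finite, and a direct computation shows $\sum_k\mu_kd_k=\sum_{i:\,Q(X_i)\text{ finite}}\delta_iQ(X_i)$ automatically. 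Alternatively, you can avoid this by rewriting each $\mu_k[Y_k-d_k]$ separately as a sum of acceptable $\D$-gambles (allowing the same $X_i$ to appear several times with different prices $c$), which sidesteps the need for a single aggregated $c_i$. Either fix is routine, but as written your ``align'' step is an assertion rather than an argument.
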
 
\begin{proof}
We show first that $P$ is
well-defined, and then that it is the unique coherent$_1$
extension of $Q$ to $\L$.  Throughout the proof, we rely on
Lemma~\ref{lem:equivecc1} because the proof is slightly less
cumbersome when worded in terms of extended-coherence. To see that $P$
is well-defined, suppose 
to the contrary that $Y\in\L$ has two representations
\begin{equation}\label{eq:twoside}
Y=\sum_{j=1}^n\alpha_jX_{i_j}=\sum_{k=1}^m\beta_kX_{\ell_k},
\end{equation} 
such that
\begin{equation}\label{eq:tworep}
\sum_{j=1}^n\alpha_jQ(X_{i_j})\ne\sum_{k=1}^m\beta_kQ(X_{\ell_k}),
\end{equation} 
where both sides of (\ref{eq:tworep}) are well-defined sums. That
\begin{equation}\label{eq:twodiff}
0\equiv\sum_{j=1}^n\alpha_jX_{i_j}-\sum_{k=1}^m\beta_kX_{\ell_k}
\end{equation} 
is immediate from (\ref{eq:twoside}).  
Regardless of which sides (if any) of (\ref{eq:tworep}) are finite or infinite,
\begin{equation}\label{eq:twosum}
\sum_{j=1}^n\alpha_jQ(X_{i_j})-\sum_{k=1}^m\beta_kQ(X_{\ell_k})\ne0,
\end{equation} 
and the sum on the left-hand side of (\ref{eq:twosum}) is well-defined.
This contradicts extended-coherence when combined with
(\ref{eq:twodiff}).  

That $P$ is extended-coherent is immediate from
Definition~\ref{def:infpi}.  It is also clear that $P$ extends $Q$ to
$\L$.  For each $Y\in\L$, to see that $p_Y$ is the unique
extended-coherent value for $P(Y)$, let 
$Y\in\L$ be represented as stated and let $p$ be an extended real
number such that 
\begin{equation}\label{eq:pnotsum}
p\ne\sum_{j=1}^n\alpha_jQ(X_{i_j}).
\end{equation}
We prove that setting $P(Y)=p$ is extended-incoherent.  The sum
\begin{equation} \label{eq:pandsum}
-p+\sum_{j=1}^n\alpha_jQ(X_{i_j})
\end{equation}
is well-defined, regardless of
whether or not either side or both sides of (\ref{eq:pnotsum}) are finite.
Then $0\equiv -Y+\sum_{j=1}^n\alpha_jX_{i_j}$ but (\ref{eq:pandsum}) is
not 0, which makes $P(Y)=p$ extended-incoherent when combined with
existing previsions.
\end{proof} 

We can now prove the converse to Lemma~\ref{lem:faecp}.
\begin{lemma}\label{lem:cpfae} 
Let $P$ be a coherent$_1$ marginal prevision on a linear space $\L$
that contains all constants. Then $P$ is a finitely additive expectation.
\end{lemma}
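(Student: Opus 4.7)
The plan is to verify the three defining properties of a finitely additive expectation in Definition~\ref{def:daniell}: normalization $P(1)=1$, nonnegativity, and extended-linearity. All three follow from results already established in the paper together with direct applications of coherence$_1$.

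First I would dispose of normalization. The constant function $1$ lies in $\L$, so $P(1)$ is defined. If $P(1) > 1$, then the acceptable gamble $1 \cdot [1 - P(1)]$ is the negative constant $1 - P(1)$, whose supremum is negative, violating coherence$_1$; the case $P(1)<1$ is symmetric using coefficient $-1$. Hence $P(1)=1$. Next I would obtain nonnegativity as an immediate consequence of Lemma~\ref{lem:nonneg} applied with $B=\Omega$: because $P(\Omega)=P(1)=1>0$, whenever $X,Y\in\L$ satisfy $X(\omega)\leq Y(\omega)$ for all $\omega\in\Omega$, we get $P(X)=P(X|\Omega)\leq P(Y|\Omega)=P(Y)$.

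The main substantive step is extended-linearity, and I would derive it directly from Lemma~\ref{lem:missing}. Apply that lemma with $\D = \L$ and $Q = P$; because $\L$ is already a linear space containing all constants, it equals the linear span described in the lemma (restricted to well-defined sums). For any $X,Y\in\L$ and real $\alpha,\beta$ for which $\alpha P(X)+\beta P(Y)$ is well-defined (i.e., not $\infty-\infty$), Lemma~\ref{lem:missing} tells us that the unique coherent$_1$ value assignable to $\alpha X + \beta Y$ is precisely $\alpha P(X)+\beta P(Y)$. Since $\alpha X+\beta Y\in\L$ and $P$ is already coherent$_1$ there, we must have $P(\alpha X+\beta Y)=\alpha P(X)+\beta P(Y)$, which is exactly (\ref{eq:el}).

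The only potential subtlety is the convention $0\times\pm\infty=0$ appearing in Definition~\ref{def:daniell}: if, say, $\alpha=0$ and $P(X)=\pm\infty$, the extended-linearity identity reduces to $P(\beta Y)=\beta P(Y)$, which is again covered by Lemma~\ref{lem:missing} applied to the single random variable $Y$. Aside from bookkeeping these edge cases with infinite previsions, no further computation is needed; the hard work has been packaged into Lemmas~\ref{lem:nonneg} and~\ref{lem:missing}, so the proof is essentially a matter of invoking them in the right order.
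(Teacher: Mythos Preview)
Your proof is correct and follows essentially the same route as the paper: normalization from coherence$_1$ directly, nonnegativity from Lemma~\ref{lem:nonneg} (with $B=\Omega$), and extended-linearity from Lemma~\ref{lem:missing}. The paper's proof is terser, but the structure and the key lemmas invoked are identical; your added remarks on the $0\times\pm\infty$ edge case and the explicit verification of $P(1)=1$ simply flesh out what the paper calls ``immediate.''
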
 
\begin{proof}
That $P(1)=1$ is immediate from coherence$_1$.  Lemma~\ref{lem:nonneg}
tells us that $X\leq Y$ implies
$P(X)\leq P(Y)$.  That $P$ is extended-linear follows from Lemma~\ref{lem:missing}.
\end{proof}  

The ways in which coherent$_1$ previsions are more general than
finitely additive expectations are the following:
\begin{itemize}
\item The domain of a coherent$_1$ prevision need not be a linear
  space.
\item The domain of a coherent$_1$ prevision need not contain all constants.
\item The definition of finitely additive expectation does not, as it
  stands, deal with conditional expectations.
\end{itemize} 
It is trivial to extend the domain of a coherent$_1$ prevision to
include all constants.  To extend the domain of a coherent$_1$
prevision beyond what Lemma~\ref{lem:missing} provides is the subject
of the Fundamental Theorem of Prevision (Theorem~\ref{thm:ftp}).
The results in Section~\ref{sec:ftcp} give
the tools needed to give meaning to finitely additive conditional expectation.

\section{Relationship Between Conditional Previsions and Marginal
  Previsions}\label{sec:ftcp}
The relationship between marginal previsions and integrals is much
more intuitive than the relationship between general conditional
previsions and integrals.  Even in the countably additive theory,
conditional expectations are defined as Radon-Nikodym derivatives
rather than as integrals.  In the finitely additive
theory used here, we defined finitely additive expectations (marginal previsions)
as a type of integral (see Section~\ref{sec:prevexp}).  To a large
extent, conditional previsions and marginal previsions determine each other.
We make that statement precise in this section.

The results of this section fall into two categories:
\begin{itemize}
\item There are some relationships between conditional and marginal
  previsions that must hold in order for them to be jointly
  coherent$_1$.  For example
\[P(XB)=P(B)P(X|B),\]
whenever the the product on the right is not $0$ times an infinite value.
\item There are some cases with $P(B)=0$ in which $P(X|B)$ is
  completely unconstrained by other specified previsions.
\end{itemize} 

The first result is that, when $P(B)>0$, $P(X|B)$ is uniquely determined from
$P(B)$ and $P(XB)$. 
\begin{lemma}\label{lem:gt0}
Let $\D=\{(X_i,B_i):i\in I\}$ be a collection of random
variable/nonempty event pairs with coherent$_1$ conditional previsions
$\{P(X_i|B_i):i\in I\}$.  Let $(X,B)$ be a random variable/nonempty
event pair such that $(B,\Omega)$ and $(XB,\Omega)$ are in $\D$ and
$P(B)>0$.  Then the only possible coherent$_1$
value for $P(X|B)$ is $P(XB)/P(B)$ (even if the
numerator is infinite.)
\end{lemma}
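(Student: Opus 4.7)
The plan is to derive a contradiction to coherence$_1$ by constructing a combination of three acceptable gambles whose sum is a strictly negative constant whenever $P(X|B) \neq P(XB)/P(B)$. The three gambles come from the three prevision assertions in play: the conditional gamble $\alpha B[X - c_1]$ on $(X,B)$, the marginal gamble $\beta[XB - c_2]$ on $XB$, and the marginal gamble $\gamma[B - P(B)]$ on $B$ (this last coefficient is unrestricted since $P(B)$ is finite). Here $c_1$ is a stand-in for $P(X|B)$ (equal to it if finite, otherwise any real; likewise $c_2$ for $P(XB)$), with the sign restrictions on $\alpha,\beta$ imposed by Definition~\ref{def:cohinf} whenever the corresponding prevision is infinite.

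The key algebraic observation is $B[X - c_1] = XB - c_1 B$, so the sum of the three gambles equals
\[
(\alpha+\beta)\,XB + (\gamma - \alpha c_1)\,B - \beta c_2 - \gamma P(B).
\]
Choosing $\beta = -\alpha$ and $\gamma = \alpha c_1$ cancels the random part, leaving the constant $\alpha\bigl[c_2 - c_1 P(B)\bigr]$. If the prescribed value $P(X|B)$ (whatever it is) failed to equal $P(XB)/P(B)$, then for an appropriate real choice of $c_1,c_2$ and an appropriate sign of $\alpha$, this constant is strictly negative, contradicting~(\ref{eq:inf0}).

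I would carry this out in three cases depending on which of $P(X|B)$ and $P(XB)$ are finite. When both are finite, $c_1 = P(X|B)$ and $c_2 = P(XB)$ are forced; a nonzero value of $c_2 - c_1 P(B)$ can be made to multiply $\alpha$ with either sign, so $\alpha$ is picked to render the constant negative. When $P(XB) = \pm\infty$ and $P(X|B)$ is finite, the coefficient $\beta$ is sign-restricted (hence so is $\alpha = -\beta$), but $c_2$ is free, so one takes $c_2$ with large absolute value of the appropriate sign to make $\alpha(c_2 - c_1 P(B))$ negative. The symmetric case with $P(X|B) = \pm\infty$ and $P(XB)$ finite is handled the same way by freely choosing $c_1$ while respecting the sign of $\alpha$; and if both are infinite, the lemma's claim $P(X|B) = P(XB)/P(B)$ forces them to have the same sign, so a mismatch again allows the signs to be chosen to drive the constant negative.

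The main obstacle is bookkeeping the sign constraints in the infinite-prevision cases: one has to verify, for each combination of signs that contradict $P(X|B) = P(XB)/P(B)$, that the required choices of $\alpha,\beta,\gamma$ and of the free $c_j$'s are simultaneously compatible with Definition~\ref{def:cohinf}. Once that is checked, the argument reduces in every case to the single identity above and the freedom to make $\alpha\bigl[c_2 - c_1 P(B)\bigr]$ strictly negative.
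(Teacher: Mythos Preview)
Your approach is essentially the same as the paper's: both reduce the three gambles (conditional on $B$, marginal on $XB$, marginal on $B$) to a constant via the identity $B(X-c_1)=XB-c_1B$, and then choose signs to make that constant strictly negative.  The paper organizes the case split by whether $P(XB)$ is finite or infinite, whereas you split on the finiteness of both $P(XB)$ and the proposed $P(X|B)$; your bookkeeping is correct in all cases.

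One point of difference worth noting: the paper's proof also establishes \emph{existence}---it shows that every gamble of the form $\alpha B\bigl[X-P(XB)/P(B)\bigr]$ (or $\alpha B[X-c]$ with the appropriate sign of $\alpha$ in the infinite case) is already a sum of acceptable gambles, so declaring $P(X|B)=P(XB)/P(B)$ adds nothing new and hence preserves coherence$_1$.  Your argument proves only uniqueness (any other value is incoherent$_1$), which is all the lemma literally asserts, but the existence half is used later in the paper (e.g.\ in Theorem~\ref{thm:ftp} and Proposition~\ref{pro:gt0}), so you may want to add that short step.
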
 
\begin{proof}
Let $P(B)=q$.  First, assume that $P(XB)=p$ is finite.
We start by showing that the gamble $\alpha B(X-p/q)$ is 
acceptable for all real $\alpha$.  We know that $\alpha(XB-p)$ and
$-(p/q)\alpha(B-q)$ are acceptable for all real $\alpha$.  Hence the
following gamble is acceptable:
\[\alpha(XB-p)-(p/q)\alpha(B-q)=\alpha B(X-p/q).\]
To see that every value other than $p/q$ is incoherent$_1$, suppose
that $P(X|B)=r\not=p/q$.  Then the following gamble would be
acceptable for all real $\alpha$:
\[\alpha(XB-p)-r\alpha(B-q)-\alpha B(X-r)=\alpha(-p+rq)<0,\]
when $\alpha$ has the opposite sign as $-p+rq$.
Next assume that $P(XB)=\infty$, as the $P(XB)=-\infty$
case is similar. First, we show 
that all gambles of the form $\alpha B(X-c)$ with $c$ real and
$\alpha\geq0$ are acceptable.  For each $\alpha\geq0$ and real $c$,
let $c_1=cq$ and $\beta=-\alpha c$.  Then
\[\alpha B(X-c)=\alpha(BX-c_1)+\beta(B-q),\]
and the sum of gambles on the right is acceptable.  To see that
$P(X|B)<\infty$ is incoherent$_1$, let $\alpha<0$, and let $c$
be real.  If $\alpha B(X-c)$ were acceptable, then the following
gamble would be acceptable for all real $d$:
\[\alpha B(X-c)-\alpha(XB-d)+\alpha c(B-q)=-\alpha(d+cq).\]
Let $d>-cq$ to see that the infimum is negative.
\end{proof} 
When $P(B)=0$, there may be multiple possible coherent$_1$
values of $P(X|B)$, but the set of possible values are determined from
$P(XB)$. 
\begin{lemma}\label{lem:pb0}
Let $\D=\{(X_i,B_i):i\in I\}$ be a collection of random
variable/nonempty event pairs with coherent$_1$ conditional previsions
$\{P(X_i|B_i):i\in I\}$.  Let $(X,B)$ be a random variable/nonempty
event pair such that $(B,\Omega)$ and $(XB,\Omega)$ are in $\D$ and
$P(B)=0$. 
\begin{enumerate} 
\item If $P(XB)\ne0$, the only possible
  coherent$_1$ value for $P(X|B)$ is the infinite value with the same
  sign as $P(XB)$.
\item If $P(XB)=0$, one can choose $P(X|B)$
  to be any extended-real number \label{part:pb02}
  $c_X$.
\end{enumerate} 
\end{lemma}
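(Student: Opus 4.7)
The plan is to prove the two parts separately, in the spirit of Lemma~\ref{lem:gt0}. The workhorse is the identity
\[\alpha B(X-c)=\alpha(XB-0)+(-\alpha c)(B-0),\]
which rewrites any gamble arising from a conditional prevision $P(X|B)$ as a sum of gambles arising from the marginal previsions $P(XB)$ and $P(B)$.

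For part~\ref{part:pb02}, since $P(B)=0$ and $P(XB)=0$ are both finite, the two summands on the right-hand side above are acceptable in $\D$ for every real $\alpha$ and every real $c$ by Definition~\ref{def:cohinf}. Hence $\alpha B(X-c)$ is already acceptable in $\D$ for every real $\alpha$, $c$. Therefore, adjoining $P(X|B)=c_X$ for any extended-real $c_X$ produces no new acceptable gambles: whether $c_X$ is finite (so that $c=c_X$ is fixed but $\alpha$ is free) or infinite (so that $\alpha$ is sign-constrained but $c$ is free), every gamble permitted by the new forecast already lies in the acceptable set of $\D$. Coherence$_1$ is therefore inherited from $\D$ for every choice of $c_X$.

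For the first part, with $p=P(XB)\ne 0$, I handle both uniqueness and existence via the same combination. Consider
\[\alpha B(X-c)+\beta(XB-d)+\gamma(B-0),\]
where $c=P(X|B)$ if $P(X|B)$ is finite (and $c$ an arbitrary real otherwise), $d=p$ if $p$ is finite (and $d$ an arbitrary real otherwise), with the sign restrictions on $\alpha$ and $\beta$ dictated by Definition~\ref{def:cohinf}. Setting $\beta=-\alpha$ and $\gamma=\alpha c$ cancels every $\omega$-dependent term and leaves the constant $\alpha d$. A short case check on the joint sign restrictions shows that one can select $\alpha\ne 0$ (and, when $d$ is free, the sign of $d$) so as to make $\alpha d<0$ in every candidate for $P(X|B)$ except precisely the infinite value whose sign matches the sign of $p$; in every forbidden candidate this yields an acceptable gamble with strictly negative supremum, contradicting (\ref{eq:inf0}). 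For the matching-sign value, the sign restrictions instead force $\alpha d\geq 0$, and the same decomposition rewrites each augmented acceptable gamble as a $\D$-acceptable gamble shifted by a nonnegative constant, so coherence$_1$ is preserved.

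The main obstacle is the bookkeeping of the sign constraints across the several combinations of finite, $+\infty$, and $-\infty$ values for the pair $(P(X|B),p)$, together with the extra freedom to choose $c$ or $d$ when the corresponding prevision is infinite. The conceptual payoff is that a single linear combination does all the work, and $\mathrm{sign}(p)\cdot\infty$ emerges as precisely the unique value of $P(X|B)$ whose sign restriction on $\alpha$ is simultaneously compatible with the sign restriction coming from $P(XB)$, forcing $\alpha d\geq 0$ under both at once.
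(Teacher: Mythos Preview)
Your proof is correct and follows essentially the same approach as the paper's. Both arguments hinge on the identity $\alpha B(X-c)=\alpha(XB-d)+(-\alpha c)(B-0)+\alpha d$ (the paper writes it with $c'$ in place of $d$): for part~\ref{part:pb02} this shows every gamble from the new conditional prevision is already $\D$-acceptable, and for the first part it simultaneously yields the incoherence of every value other than $\mathrm{sign}(P(XB))\cdot\infty$ (by making the constant $\alpha d$ strictly negative) and the coherence of that value (by exhibiting each new gamble as a $\D$-acceptable gamble plus a nonnegative constant). Your slightly more unified packaging---one linear combination with a case check on the sign constraints---is equivalent to the paper's separate existence/uniqueness paragraphs.
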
 
\begin{proof}
\begin{enumerate}
\item  Let $c'$ have the same sign as $P(XB)$ if $P(XB)$
  is infinite, and let $c'=P(XB)$ otherwise.
To see that the specified infinite value is coherent$_1$,
  we will show that, for all real $c$ and all $\alpha$ with the
  same sign as $P(XB)$, $\alpha B(X-c)$ is an 
  acceptable gamble plus a positive number. For each
  such $\alpha$ and $c$,
\[\alpha B(X-c)=\alpha(BX-c')-\alpha cB+\alpha c',\]
where $\alpha c'>0$ and the other two gambles on the right are
acceptable.  To see that no other value is coherent$_1$,
suppose that we try to set $P(X|B)=p$, where $p$ is not the infinite
value with the same sign as $P(XB)$.  Then the following gamble
is acceptable, where $\alpha$ has the same sign as $P(XB)$,
and $c=p$ if $p$ is finite:
\[-\alpha B(X-c)+\alpha(XB-c')-\alpha cB=-c'\alpha<0.\]
Hence $P(X|B)=p$ is incoherent$_1$.
\item We show that all gambles of the form $\alpha_XB(X-c_X)$
  with $\alpha_X$ and $c_X$ real are acceptable.  For each
  such $\alpha_X$ and $c_X$, let $\beta=-\alpha_Xc_X$.  Then
\[\alpha_XB(X-c_X)=\alpha_XXB+\beta B,\]
and the two gambles on the right are acceptable.
\end{enumerate} 
\end{proof} 

We are now in a position to define finitely additive conditional
expectation.
\begin{definition}
{\rm Let $\L$ be a linear space of random variables, and let $\B$ be a
  collection of nonempty events that includes $\Omega$.  Suppose that
  $XB\in\L$ for every $X\in\L$ and $B\in\B$. Let $L$ be
a finitely additive expectation on $\L$.  Let
$L(\cdot|\cdot):\L\times\B\rightarrow\Real\cup\{\pm\infty\}$ be such
that for each $B\in\B$, $L(\cdot|B)$ is a finitely additive
expectation on $\L$ with $L(XB|B)=L(X|B)$ for all $X\in\L$ and such
that $\{L(X|B):X\in\L,B\in\B\}$ is a coherent$_1$ conditional
prevision.  Then we call $L(\cdot|\cdot)$ a {\em finitely additive
  conditional expectation}.}
\end{definition} 

The following result, whose proof is trivial, says that coherent$_1$
conditional prevision given an event of positive probability defines a
finitely additive conditional expectation.
\begin{proposition}\label{pro:gt0}
Let $P$ be a coherent$_1$ prevision on a linear space $\L$ containing
all constants.  Let $\B=\{B:P(B)>0\}$.  Assume that $XB\in\L$ for
each $X\in\L$ and $B\in\B$. Define $L(X|B)=P(XB)/P(B)$  
for all $X\in\L$ and $B\in\B$.  Then $L(\cdot|\cdot)$ is a finitely
additive conditional expectation. 
\end{proposition}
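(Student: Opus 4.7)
The plan is to verify, in turn, each of the three requirements in the definition of a finitely additive conditional expectation, each of which reduces to coherence$_1$ of $P$ via elementary manipulations. Throughout I use Lemma~\ref{lem:cpfae} to treat $P$ as an extended-linear, monotone functional on $\L$.

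First, I will show that $L(\cdot|B)$ is a finitely additive expectation for each fixed $B\in\B$. Normalization is immediate: $L(1|B)=P(B)/P(B)=1$. Monotonicity follows because $X\leq Y$ implies $XB\leq YB$ pointwise (as $B$ is an indicator), so Lemma~\ref{lem:nonneg} yields $P(XB)\leq P(YB)$, and dividing by the finite positive $P(B)$ preserves the inequality. Extended-linearity is inherited from $P$ by dividing through by $P(B)$: whenever $\alpha L(X|B)+\beta L(Y|B)$ is well-defined,
\[L(\alpha X+\beta Y|B)=\frac{P(\alpha XB+\beta YB)}{P(B)}=\frac{\alpha P(XB)+\beta P(YB)}{P(B)}=\alpha L(X|B)+\beta L(Y|B).\]
The identity $L(XB|B)=L(X|B)$ is immediate from $B^2=B$, and setting $B=\Omega$ recovers $L(X|\Omega)=P(X)$, so the marginal previsions are preserved.

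The only nontrivial point is verifying coherence$_1$ of the whole collection $\{L(X|B):X\in\L,B\in\B\}$. Here I will rewrite each summand of an arbitrary acceptable conditional gamble using the identity
\[\alpha B(X-c)=\alpha\bigl[XB-cP(B)\bigr]-\alpha c\bigl[B-P(B)\bigr].\]
The second gamble on the right is acceptable for $P$ since $P(B)$ is finite and $\alpha c$ is real. For the first gamble, when $L(X|B)$ is finite the required value $c=P(XB)/P(B)$ makes $cP(B)=P(XB)$, so the first gamble is a fair gamble at the marginal prevision $P(XB)$. When $L(X|B)=\pm\infty$, we have $P(XB)=\pm\infty$ with the same sign (since $P(B)>0$), the coefficient $\alpha$ in any acceptable conditional gamble has that same sign by Definition~\ref{def:cohinf}, and that same definition allows any real constant (here $cP(B)$) in the corresponding acceptable marginal gamble. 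Summing the decompositions shows that every acceptable gamble for the conditional system is a sum of acceptable gambles for $P$, so coherence$_1$ of $P$ delivers the nonnegative supremum.

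The main obstacle, and really the only subtle part, is the bookkeeping on signs and finite/infinite cases when $L(X|B)$ is infinite; the single identity above handles all cases uniformly, which is why the paper classifies the proof as trivial.
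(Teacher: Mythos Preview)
Your proof is correct. The paper actually omits this proof entirely, labeling it ``trivial,'' so there is no detailed argument to compare against; your verification of the three clauses in the definition is exactly the kind of routine check the authors are skipping, and your decomposition $\alpha B(X-c)=\alpha[XB-cP(B)]-\alpha c[B-P(B)]$ is essentially the same manipulation used in the proof of Lemma~\ref{lem:gt0} to show that the conditional gamble is already an acceptable combination of marginal gambles.
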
 
In light of Lemma~\ref{lem:pb0}, it is clear that not every
coherent$_1$ conditional prevision given an event of 0 probability
defines a finitely additive conditional expectation.  After we prove
the Fundamental Theorem of Prevision, we can show
(Lemma~\ref{lem:ftcp}) that, if we start with a coherent$_1$ marginal
prevision, then for each 
nonempty event with 0 probability, there exists a coherent$_1$
conditional prevision that defines a a finitely additive conditional
expectation.

\section{The Fundamental Theorem of Prevision}\label{sec:ftp}
Attempts to define a unique expectation from a probability often fail
to provide an expectation for a random variable of the form $X-Y$
where both $X$ and $Y$ have infinite integral, especially when $XY$ is
identically 0.  Finitely additive expectations for such random
variables are guaranteed to exist, but they may not be unique.  The
tool for extending a coherent$_1$ collection of (conditional)
previsions to a larger collection is the fundamental theorem of prevision.

De Finetti (1974) proved an elementary version of the fundamental
theorem of prevision. That versions said that, if a collection of
events has been assigned coherent$_1$ previsions, then for each
additional event $E$, there is a nonempty closed interval such that
one can coherently choose $P(E)$ to equal any number in that interval.
Here, we prove a very general version that 
applies to unbounded random variables, infinite previsions,
conditional previsions, and extensions to arbitrary collections of
random variables.

The main step in the general version of the fundamental theorem is to
start with a collection of coherent$_1$ conditional previsions and add
one additional coherent$_1$ marginal prevision.
\begin{lemma}\label{lem:ftp}
Let $\D=\{(X_i,B_i):i\in I\}$ be a collection of random
variable/nonempty event pairs with coherent$_1$ conditional previsions
$\{P(X_i|B_i):i\in I\}$.  Let $X$ be a random variable. Then there
exists a nonempty interval $[a,b]$ such that $P(X|\Omega)=p$ is
coherent$_1$ with the previsions in $\D$ if and only if $p\in[a,b]$.
\end{lemma}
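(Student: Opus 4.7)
The plan is to identify the set of extended-real $p$ for which $P(X|\Omega)=p$ is coherent$_1$ with the gambles already offered from $\D$. Let $\mathcal A$ denote the convex cone of gambles acceptable from $\D$, i.e.\ finite sums $\sum_j\alpha_jB_{i_j}[X_{i_j}-c_j]$ with the sign restrictions on the $\alpha_j$ and the choice of $c_j$ prescribed by Definition~\ref{def:infprev}. I would set
\[a\;=\;\sup_{H\in\mathcal A}\,\inf_{\omega}[X(\omega)-H(\omega)],\qquad
b\;=\;\inf_{H\in\mathcal A}\,\sup_{\omega}[X(\omega)+H(\omega)],\]
with suprema and infima interpreted in the extended reals, and show that the coherent values of $p$ are exactly the extended interval $[a,b]$.

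First I would translate coherence$_1$ of the augmented collection into conditions on $p$. Any acceptable gamble that uses the new prevision has the form $G+\beta(X-c)$ with $G\in\mathcal A$, where for finite $p$ we fix $c=p$ and let $\beta$ be any real, while for $p=\pm\infty$ the sign of $\beta$ is constrained and $c$ is any real. Splitting by the sign of $\beta$ and rescaling $H=G/|\beta|\in\mathcal A$ (using that $\mathcal A$ is a cone), the requirement $\sup_\omega[G+\beta(X-c)]\geq 0$ reduces, for $\beta>0$, to $c\leq\sup_\omega[X+H]$ for every $H\in\mathcal A$, i.e.\ $c\leq b$; and for $\beta<0$, to $c\geq\inf_\omega[X-H]$ for every $H\in\mathcal A$, i.e.\ $c\geq a$. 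Hence a finite $p$ is coherent iff $p\in[a,b]$; while $p=\infty$ forces $c$ to range over all reals with $\beta\geq 0$, which holds iff $b=\infty$, and symmetrically $p=-\infty$ is coherent iff $a=-\infty$. Closedness of the interval at finite endpoints follows because the defining inequalities pass to the supremum/infimum.

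The key step is $a\leq b$, which is where coherence$_1$ of $\D$ enters. If instead $a>b$, pick $H_1,H_2\in\mathcal A$ and $\varepsilon>0$ with
\[\inf_\omega[X(\omega)-H_1(\omega)]\;\geq\;\sup_\omega[X(\omega)+H_2(\omega)]+\varepsilon.\]
Evaluating both sides at a single $\omega$ gives $-H_1(\omega)-H_2(\omega)\geq\varepsilon$ for every $\omega$, hence $\sup_\omega[H_1(\omega)+H_2(\omega)]\leq-\varepsilon<0$. But $H_1+H_2\in\mathcal A$ because $\mathcal A$ is closed under addition, contradicting coherence$_1$ of $\{P(X_i|B_i):i\in I\}$.

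The main obstacle I anticipate is the bookkeeping required to keep $\mathcal A$ consistent with Definition~\ref{def:infprev} when some of the previsions in $\D$ are already infinite, and in particular verifying that $\mathcal A$ really is closed under nonnegative scalar multiplication and addition under those sign constraints. Once that is settled, the argument above reduces to the classical separating-bounds proof of the Fundamental Theorem of Prevision, with the extended-real interval $[a,b]$ absorbing the new cases $p=\pm\infty$.
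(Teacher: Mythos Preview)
Your proposal is correct and is essentially the same argument as the paper's. Your quantities $a=\sup_{H\in\mathcal A}\inf_\omega[X-H]$ and $b=\inf_{H\in\mathcal A}\sup_\omega[X+H]$ coincide with the paper's $a=\sup\{f:\exists Y\in\C,\ Y+f\le X\}$ and $b=\inf\{f:\exists Y\in\C,\ -Y+f\ge X\}$, and your proof that $a\le b$ via $H_1+H_2\in\mathcal A$ having nonnegative supremum is exactly the paper's step. The only stylistic difference is that you handle the characterization of coherent $p$ by rescaling via the cone property of $\mathcal A$ (which, as you note, is routine to verify even with the sign constraints from infinite previsions), whereas the paper argues the ``in $[a,b]$'' and ``outside $[a,b]$'' directions separately by contradiction; the content is the same.
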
 
\begin{proof}
Let $\C$ be the set of acceptable gambles, and define
\begin{eqnarray*}
A&=&\{f:\exists Y\in\C\mbox{\ with\ }Y+f\leq X\},\\  
B&=&\{f:\exists Y\in\C\mbox{\ with\ }-Y+f\geq X\},\\
a&=&\sup A,\\
b&=&\inf B.
\end{eqnarray*} 
The first step is to show that $a\leq b$.  If either
$A=\emptyset$ or $B=\emptyset$, the inequality is
trivial, so assume that neither set is empty.
We need to show that for all $f^a\in A$ and $f^b\in B$,
$f^a\leq f^b$.  For all $f^a\in A$ and $f^b\in B$, there exist
$Y^a,Y^b\in\C$ be such that 
\[Y^a+f^a\leq X\leq -Y^b+f^b.\]
Hence
\[Y^a+Y^b\leq f^b-f^a.\]
Since $Y^a+Y^b\in\C$, $\sup_\omega[Y^a(\omega)+Y^b(\omega)]\geq0$, it
follows that $f^b-f^a\geq0$, which completes the first step.

The second step is to show that every value in the interval $[a,b]$ is
a coherent$_1$ value for $P(X|\Omega)$. Let $p\in[a,b]$.  We need to
show that, for every $Y\in\C$,
$\sup_\omega\{Y(\omega)+\alpha[X(\omega)-c]\}\geq0$, where $c=p$ if
$p$ is finite, and $\alpha$ has the same sign as $p$ if $p$ is infinite.
Consider first, the case when $p$ is finite.  Suppose, to the
contrary, that there exists $Y\in\C$ and $\alpha$ such that
$\sup_\omega\{Y(\omega)+\alpha[X(\omega)-p]\}=-\epsilon<0$.  Clearly
$\alpha\ne0$.  If $\alpha>0$, then $X\leq
p-(\epsilon/\alpha)-Y/\alpha$.  Since $\alpha>0$, $Y/\alpha\in\C$, so
$p-(\epsilon/\alpha)\in B$ and $p-(\epsilon/\alpha)\geq b$, which
contradicts $p\leq b$.  Similarly, if $\alpha<0$, then $X\geq
p-(\epsilon/\alpha)-Y/\alpha$. Since $\alpha<0$, $-Y/\alpha\in\C$, so
$p-(\epsilon/\alpha)\in A$ and $p-(\epsilon/alpha)\leq a$, which
contradicts $p\geq a$.  Next, consider the case in which $p=-\infty$
so that $\alpha<0$, $a=-\infty$, and $A=\emptyset$.  Then
$-Y/\alpha\in\C$, and $X\geq c-(\epsilon/\alpha)-Y/\alpha$, which means
that $c-(\epsilon/\alpha)\in A$, which contradicts $A=\emptyset$.
Finally, if $p=\infty$, then $\alpha>0$, $b=\infty$, $B=\emptyset$,
$Y/\alpha\in\C$,  $X\leq c-(\epsilon/\alpha)-Y/\alpha$,
$c-(\epsilon/\alpha)\in B$, a contradiction.

The last step is to prove that every number outside of the interval
$[a,b]$ is an incoherent$_1$ choice for $P(X|\Omega)$.  First, assume
that we choose $P(X|\Omega)=p<a$. Of  
necessity, $A\not=\emptyset$, $a>-\infty$, and for every $f\in A$, there
exists $Y\in\C$ such that $Y+f\leq X$.  Let $\alpha<0$,
let $c=p$ if $p$ is finite and $c<a$ if $p=-\infty$.  Next, choose
$f\in(c,a]$ and $Y\in\C$ such that $Y+f\leq X$.  Then $-\alpha Y\in\C$,
and 
\[\alpha(X-c)-\alpha Y\leq\alpha(f-c)<0,\]
showing that $P(X|\Omega)=p$ is incoherent$_1$.
Finally, assume that we choose $P(X|\Omega)=p>b$.  Of necessity,
$B\not=\emptyset$, $b<\infty$, and for every $f\in B$, there 
exists $Y\in\C$ such that $-Y+f\geq X$.  Let $\alpha>0$,
let $c=p$ if $p$ is finite and $c>b$ if $p=\infty$.  Next, choose
$f\in[b,c)$ and $Y\in\C$ such that $-Y+f\geq X$.  Then $\alpha Y\in\C$,
and 
\[\alpha(X-c)+\alpha Y\leq\alpha(f-c)<0,\]
showing that $P(X|\Omega)=p$ is incoherent$_1$.
\end{proof} 

As an example of Lemma~\ref{lem:ftp}, we return to
Example~\ref{exa:stpete}
\begin{example}
{\rm In Example~\ref{exa:stpete}, $\Omega$ is the set of integers. We
introduced a random variable 
$X=\sum_{k=1}^\infty k\{\omega=k\}$ after assigning previsions
$P(\{\omega=k\})=2^{-k}$ for $k\geq1$.  We then proved (among other
things) that every number in the interval $[2,\infty]$ could be chosen
as a coherent$_1$ prevision for $X$. This fact actually follows from
Lemma~\ref{lem:ftp}.  Let $\D=\{(\{\omega=k\},\Omega): k\mbox{\ an
  integer}\}$.  The acceptable gambles have the form
\[Y(\omega)=\sum_{k=1}^\infty\alpha_k\left[\{\omega=k\}-2^{-k}\right],\]
where only finitely many $\alpha_k$ are nonzero.  In order for $Y+f\leq X$ it is
necessary and sufficient that 
\[f\leq\sum_{k=1}^\infty\alpha_k2^{-k}+\inf_{k\geq1}(k-\alpha_k)
=2-\sum_{k=1}^\infty(k-\alpha_k)2^{-k}+\inf_{k\geq1}(k-\alpha_k).\]
Since $\sum_{k=1}^\infty(k-\alpha_k)2^{-k}\geq\inf_{k\geq1}(k-\alpha_k)$, we have
$f\leq2$.  We can get $f$ as close as we want to 2 by choosing
$\alpha_k=k$ for all $1\leq k\leq n$ and $n$ large.  This makes $a=2$
in Lemma~\ref{lem:ftp}.  Since $\sup_\omega[-Y(\omega)+f]$ is finite
for all acceptable gambles and all $f$, $-Y+f\geq X$ is impossible.  Hence, the set
$B$ in the proof of Lemma~\ref{lem:ftp} is empty, and $b=\infty$.
}\end{example} 

We are now in position to prove the general extension theorem.

\begin{theorem}\label{thm:ftp}
Let $\D=\{(X_i,B_i):i\in I\}$ be a collection of random
variable/nonempty event pairs with coherent$_1$ conditional previsions
$\{P(X_i|B_i):i\in I\}$ and that
contains all pairs of the form $(c,\Omega)$ with $c$ a real number.
Let $\F$ be a set of random variable/nonempty event pairs that
contains $\D$. Then there exists a coherent$_1$ extension $P'$ of $P$ to $\F$.
\end{theorem}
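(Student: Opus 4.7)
The plan is to combine a Zorn's lemma argument with Lemma~\ref{lem:ftp} together with the ``bridge'' results Lemmas~\ref{lem:gt0} and~\ref{lem:pb0}. Let $\mathcal{E}$ be the poset of all coherent$_1$ conditional previsions defined on subsets of $\F$ extending the given $P$ on $\D$, ordered by extension. Since coherence$_1$ in Definition~\ref{def:cohinf} is witnessed on finite subcollections, the union of any chain in $\mathcal{E}$ is again coherent$_1$ and hence an upper bound. By Zorn's lemma, $\mathcal{E}$ has a maximal element $P^*$ defined on some $\F^*$ with $\D\subseteq\F^*\subseteq\F$. The goal reduces to showing $\F^*=\F$.

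Suppose toward contradiction that there is a pair $(X,B)\in\F\setminus\F^*$. I will produce a strict coherent$_1$ extension of $P^*$, contradicting maximality. First, if $(B,\Omega)\notin\F^*$, apply Lemma~\ref{lem:ftp} to extend the domain to include $(B,\Omega)$ by choosing any value from the nonempty interval of coherent$_1$ choices it guarantees; then, if $(BX,\Omega)$ is still absent, repeat the trick to include $(BX,\Omega)$. These preliminary extensions are valid applications of Lemma~\ref{lem:ftp} because $\D$, and hence $\F^*$, contains every pair $(c,\Omega)$ with $c$ real, as required by the hypotheses of that lemma. The resulting collection is still coherent$_1$ and lies in $\mathcal{E}$. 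Now both $P^*(B)$ and $P^*(BX)$ are defined, so we may apply Lemma~\ref{lem:gt0} when $P^*(B)>0$ to set $P^*(X|B):=P^*(BX)/P^*(B)$, or Lemma~\ref{lem:pb0} when $P^*(B)=0$ to set $P^*(X|B)$ equal to the sign-matched infinity of $P^*(BX)$ when $P^*(BX)\ne0$ and to an arbitrary extended real otherwise. Each of these lemmas establishes that the chosen value is coherent$_1$ with all previously assigned previsions, so we have a proper extension of $P^*$, contradicting maximality. Hence $\F^*=\F$ and the desired $P'$ exists.

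The main obstacle is that Lemma~\ref{lem:ftp} only extends by a marginal prevision on $\Omega$, while the target pair $(X,B)$ carries a genuine conditioning event $B$ that may differ from $\Omega$. The key idea is to reduce the conditional extension to the marginal one by first forcing the products $(B,\Omega)$ and $(BX,\Omega)$ into the domain via Lemma~\ref{lem:ftp}, and only then invoking Lemmas~\ref{lem:gt0} and~\ref{lem:pb0} to read off a coherent$_1$ value for $P^*(X|B)$. A secondary point to verify carefully is that chain unions stay coherent$_1$, which is immediate because coherence$_1$ involves only finite subcollections of previsions.
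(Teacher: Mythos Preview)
Your approach is essentially the paper's: where the paper well-orders $\F\setminus\D$ and runs a transfinite induction, you package the same step-by-step extension (Lemma~\ref{lem:ftp} for $(B,\Omega)$, then for $(XB,\Omega)$, then Lemma~\ref{lem:gt0} or~\ref{lem:pb0} for $(X,B)$) inside a Zorn's-lemma maximality argument, with the limit-ordinal step replaced by the observation that chain unions remain coherent$_1$ because coherence$_1$ is finitary.

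There is one slip to patch. You assert that after adjoining $(B,\Omega)$ and $(XB,\Omega)$ the resulting collection ``lies in $\mathcal{E}$,'' but $\mathcal{E}$ was defined to consist of coherent$_1$ previsions on \emph{subsets of $\F$}, and nothing guarantees $(B,\Omega)$ or $(XB,\Omega)$ belong to $\F$. The intermediate collections may therefore leave $\mathcal{E}$ and cannot by themselves contradict maximality. The fix is immediate: treat $(B,\Omega)$ and $(XB,\Omega)$ as auxiliary scaffolding used only to determine a coherent$_1$ value for $P^*(X|B)$ via Lemmas~\ref{lem:gt0}/\ref{lem:pb0}; then observe that the \emph{restriction} of the resulting coherent$_1$ prevision to $\F^*\cup\{(X,B)\}$ is still coherent$_1$ (restrictions of coherent$_1$ collections are coherent$_1$), does lie in $\mathcal{E}$, and properly extends $P^*$. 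This is exactly what the paper does implicitly when it sets $\D_{\gamma+1}=\D_\gamma\cup\{(X_{\gamma+1},B_{\gamma+1})\}$ without retaining the auxiliary marginal previsions in the induction. (A minor aside: Lemma~\ref{lem:ftp} itself does not require the domain to contain all constants; that hypothesis sits in Theorem~\ref{thm:ftp}, and your argument does not actually need it at the point you invoke it.)
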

\begin{proof}
We prove the result by extending $P$ one gamble at a time and applying
transfinite induction to cover the whole set $\F$. Let $\D_0=\D$ and
$P_0=P$. Well-order the set $\F\setminus\D$ as
$\{(X_\gamma,B_\gamma):1\leq\gamma\leq\Gamma\}$.  For each
$\gamma\leq\Gamma$, let 
$\D_\gamma=\D\cup\{(X_\beta,B_\beta):1\leq\beta\leq\gamma\}$.  For each
successor ordinal $\gamma+1$, assume that we have a coherent$_1$ extension
$P_\gamma$ to $\D_\gamma$.  (The assumption is true by hypothesis when
$\gamma=0$.) We extend $P_\gamma$ to $\D_{\gamma+1}$ as follows.
Apply Lemma~\ref{lem:ftp} with $\D=\D_\gamma$ and $X=B_{\gamma+1}$ to find a
coherent$_1$ marginal prevision for $B_{\gamma+1}$.  Let
$\D'=\D_\gamma\cup\{(B_{\gamma+1},\Omega)\}$.  Apply Lemma~\ref{lem:ftp}
with $\D=\D'$ and $X=X_{\gamma+1} B_{\gamma+1}$ to find a coherent$_1$
marginal prevision for $X_{\gamma+1} B_{\gamma+1}$.  Apply Lemma~\ref{lem:gt0}
or Lemma~\ref{lem:pb0} to find a coherent$_1$ conditional prevision
for $X_{\gamma+1}$ given $B_{\gamma+1}$.  Set
$P_{\gamma+1}(X_{\gamma+1}|B_{\gamma+1})$ equal to this coherent$_1$
conditional prevision.  This completes the induction step for
successor ordinals.

If $\gamma$ is a limit ordinal, assume that we have a coherent$_1$
extension of $P$ to $P_\beta$ for all $\beta<\gamma$.  Each
$(X,B)\in\D_\gamma$ is either in $\D$ or equals $(X_\beta,B_\beta)$ for some
$\beta<\gamma$.  So we define $P_\gamma(X|B)=P_\beta(X|B)$. 
These previsions are coherent$_1$ because every finite collection
appears in the induction at some $\beta<\gamma$.
This completes the proof of the induction step.
\end{proof} 

Finally, we can prove the existence of finitely additive conditional
expectations that agree with coherent$_1$ conditional previsions.
\begin{lemma}\label{lem:ftcp}
Let $\D$ be a collection of random variables that contains all
constants with coherent$_1$ marginal
previsions $P(\cdot)$. Let $\L$ be the linear span of all functions of the form
$XB$ for $X\in\D$ and $B$ a nonempty event. There exists a
coherent$_1$ conditional prevision on the set $\{(X,B):X\in\L,
B\ne\emptyset\}$ such that $P(\cdot|B)$ is a finitely additive
conditional expectation for each nonempty $B$.
\end{lemma}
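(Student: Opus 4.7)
The plan is to build the conditional prevision in three stages.  First, I would extend $P$ from $\D$ to a finitely additive expectation $P'$ on $\L$: Theorem~\ref{thm:ftp} produces a coherent$_1$ extension of $P$ that assigns marginal previsions to every generator $XB$ with $X\in\D$ and nonempty $B$, Lemma~\ref{lem:missing} then extends this uniquely and linearly to all of $\L$, and Lemma~\ref{lem:cpfae} identifies the result $P'$ as a finitely additive expectation on $\L$ (which contains all constants because $1\in\D$).  A bookkeeping argument shows that $\L$ is closed under multiplication by events: if $X=\sum_i\alpha_iX_iB_i\in\L$ and $B$ is any event, then $XB=\sum_i\alpha_iX_i(B_i\cap B)$, each nonzero term of which is a generator of $\L$.

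For each nonempty $B$ with $P'(B)>0$, I would then set $P(X|B)=P'(XB)/P'(B)$; Proposition~\ref{pro:gt0} shows this defines a finitely additive conditional expectation, and Lemma~\ref{lem:gt0} guarantees these values are forced by coherence$_1$.  The main work is the case $P'(B)=0$.  Here let $K=\{X\in\L:P'(XB)=0\}$, a linear subspace of $\L$ containing all constants (since $P'(cB)=cP'(B)=0$), and consider the restriction map $\phi:K\to\Real^B$, $\phi(X)=X|_B$.  The image $\phi(K)$ is a linear space of real-valued functions on $B$ containing all constants, so another application of Theorem~\ref{thm:ftp} and Lemma~\ref{lem:cpfae} (carried out with $B$ as the ground set) yields a finitely additive expectation $\tilde Q$ on $\phi(K)$ with $\tilde Q(1)=1$.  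I would set $P(X|B)=\tilde Q(\phi(X))$ for $X\in K$, and $P(X|B)=\pm\infty$ with the sign of $P'(XB)$ when $X\notin K$; the latter matches the constraint imposed by Lemma~\ref{lem:pb0}.  Linearity, nonnegativity, and $P(1|B)=1$ for the resulting $P(\cdot|B)$ are straightforward to check (the infinite values are arranged consistently because $P'(\cdot B)$ is itself linear), and $P(XB|B)=P(X|B)$ holds because $(XB)B=XB$, so $X\in K$ iff $XB\in K$ and $(XB)|_B=X|_B$.

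The remaining task, and the main obstacle, is to verify joint coherence$_1$ of the marginal $P'$ together with all of the conditional previsions $P(\cdot|B)$.  The key observation is that every conditional acceptable gamble $\beta B(X-c)$ equals $\beta Y$ with $Y=(X-c)B\in\L$, and $P'(Y)=P'(XB)-cP'(B)$ is always well defined because $P'(B)\in[0,1]$ is finite.  Our construction gives $P'(Y)=0$ whenever $c=P(X|B)$ is finite (either because $X\in K$ and $P'(B)=0$, or because $c=P'(XB)/P'(B)$ when $P'(B)>0$), and $\beta P'(Y)\ge 0$ whenever $c$ is free because $P(X|B)=\pm\infty$ (the sign restriction on $\beta$ was defined precisely to match the sign of $P'(XB)$).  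Consequently $\beta Y$ decomposes as a marginal acceptable gamble plus a nonnegative constant: write $\beta Y=\beta(Y-P'(Y))+\beta P'(Y)$ when $P'(Y)$ is finite, or note that $\beta(Y-0)$ is itself marginal acceptable when $P'(Y)=\pm\infty$ with matching sign of $\beta$.  Summing any finite collection of such conditional gambles together with a marginal acceptable gamble therefore yields a marginal acceptable gamble plus a nonnegative constant, whose supremum is nonnegative by the marginal coherence$_1$ of $P'$.  This establishes joint coherence$_1$ and completes the construction.
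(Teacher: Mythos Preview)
Your argument follows essentially the same route as the paper's: extend the marginal prevision to all of $\L$ via Theorem~\ref{thm:ftp}, read off the forced conditional values from Lemmas~\ref{lem:gt0} and~\ref{lem:pb0}, and for each $B$ with $P'(B)=0$ run Theorem~\ref{thm:ftp} again with $B$ as the state space to manufacture a finitely additive expectation there.  The paper seeds that second application with both the constants \emph{and} the forced $\pm\infty$ values and extends in one shot; you instead extend from constants alone on $\phi(K)$ and append the $\pm\infty$ values afterward, then check directly that the patched-together $P(\cdot|B)$ is a finitely additive expectation.  Both work, and your final paragraph makes explicit the mechanism (each conditional gamble is a marginal acceptable gamble plus a nonnegative constant) that the paper leaves buried in the proofs of Lemmas~\ref{lem:gt0}--\ref{lem:pb0}.

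One small technical slip: Lemma~\ref{lem:missing} does \emph{not} extend the prevision to all of $\L$; it only reaches those linear combinations $\sum_j\alpha_jX_{i_j}B_{i_j}$ for which $\sum_j\alpha_jP'(X_{i_j}B_{i_j})$ is a well-defined extended real, so any sum mixing a $+\infty$ generator with a $-\infty$ generator is left uncovered.  The fix is immediate---simply apply Theorem~\ref{thm:ftp} directly with $\F=\{(X,\Omega):X\in\L\}$, exactly as the paper does---and then Lemma~\ref{lem:cpfae} gives you the finitely additive expectation on $\L$ without the detour through Lemma~\ref{lem:missing}.
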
 
\begin{proof}
Use Theorem~\ref{thm:ftp} to extend $P$ to $\{(X,\Omega): X\in\L\}$,
which includes all pairs of the form $(B,\Omega)$ with
$B\ne\emptyset$.  Lemma~\ref{lem:gt0} and the first part of
Lemma~\ref{lem:pb0} specify all of the conditional previsions whose
coherent$_1$ values are uniquely determined from the marginal
previsions.  The proof of the second part of Lemma~\ref{lem:pb0}
actually shows that $P(X|B)$ can simultaneously be set to arbitrary
values for all $(X,B)$ with $P(B)=P(XB)=0$.  The proof will be
complete if we can choose values for all such $P(X|B)$ so that the
resulting $P(\cdot|\cdot)$ is a finitely additive conditional
expectation.  The only restrictions that we have to obey are those
caused by conditional previsions fixed by the first part of
Lemma~\ref{lem:pb0}.  For each $B$ with $P(B)=0$ and each $X\in\L$,
let $X_B:B\rightarrow\Real$ be defined by $X_B(\omega)=X(\omega)$ for
$\omega\in B$. That is, $X_B$ is $X$ restricted to domain $B$. Let
$\D_B$ be the set of all $(X_B,B)$ such that $X\in\L$ and either 
$P(X|B)$ is uniquely determined by the first part of
Lemma~\ref{lem:pb0} or $X$ is constant on $B$.  For constant $X$, set
$Q(X_B|B)$ equal to that constant, and for all other $X$, let
$Q(X_B|B)=P(X|B)$.  This makes $Q(\cdot|B)$ a coherent$_1$ marginal
prevision on $\D_B$ with $B$ as the state space.  Use Theorem~\ref{thm:ftp} with
$\Omega=B$ and $\D=\D_B$ to extend $Q$ to $\{(X_B,B): X\in\L\}$, which
makes $Q(\cdot|B)$ a finitely additive expectation on $\{X_B:X\in\L\}$.
Define $P(X|B)=Q(X_B|B)$, and note that $P(XB|B)=P(X|B)$ and
$P(\cdot|\cdot)$ is a finitely additive conditional expectation.
\end{proof}

\section{Discussion}
In this paper we extend both of de Finetti's (1974) concepts of
coherent prevision to unbounded random variables and infinite
previsions. We define infinite prevision so that it makes sense in the
gambling interpretation of prevision.  We define how proper scoring
rules can be used to score potentially infinite previsions.  We extend
the equivalence of the avoidance of sure loss from gambling and the
nonexistence of uniformly smaller scores to a large class of strictly
proper scoring rules and potentially infinite previsions.  We use a
finitely additive extension of the concept of Daniell integral to
define finitely additive expectation on an arbitrary linear space of
random variables, and show that it is equivalent to our extension of
coherence to include infinite previsions.  We give a version of the
fundamental theorem of prevision that applies to unbounded random
variables, infinite previsions, and conditional previsions.  The
fundamental theorem allows extension of a coherent conditional
prevision from an arbitrary set of random variables to an arbitrary
larger set of random variables, including the set of all random variables.

\end{document}